\newtheorem{thm}{Theorem}[section]
\newtheorem{cor}[thm]{Corollary}
\newtheorem{lem}[thm]{Lemma}
\newtheorem{prop}[thm]{Proposition}
\theoremstyle{definition}
\newtheorem{defn}[thm]{Definition}
\theoremstyle{remark}
\newtheorem{rem}[thm]{Remark}
\numberwithin{equation}{section}
\newcommand{\paren}[1]{\left(#1\right)}
\newcommand{\corch}[1]{\left[#1\right]}
\newcommand{\N}{\hbox{\ensuremath{\mathbb{N}}}}
\thanks{* Corresponding author.\\
\textit{E-mail addresses:}  sheinek@dm.uba.ar (S. B. Heineken),
jllarena@unsl.edu.ar (J. P. Llarena),\\ morillas@unsl.edu.ar (P. M.
Morillas)}
\begin{document}

\title[On the minimizers of the fusion frame potential]{On the minimizers of the fusion frame potential}

\author[Sigrid B. Heineken]{Sigrid B. Heineken$^{1,*}$}%

\author[Juan P. Llarena]{Juan P. Llarena$^{2}$}%

\author[Patricia M. Morillas]{Patricia M. Morillas$^2$\\\\ $^{1}$\textit{D\lowercase{epartamento de} M\lowercase{atem\'atica}, FCE\lowercase{y}N, U\lowercase{niversidad de }B\lowercase{uenos} A\lowercase{ires}, P\lowercase{abell\'on} I, C\lowercase{iudad }U\lowercase{niversitaria}, IMAS, UBA-CONICET, C1428EGA C.A.B.A. B\lowercase{uenos} A\lowercase{ires}, A\lowercase{rgentina}\\ $^2$ I\lowercase{nstituto de }M\lowercase{atem\'{a}tica }A\lowercase{plicada} S\lowercase{an} L\lowercase{uis, }UNSL-CONICET \lowercase{and} D\lowercase{epartamento de} M\lowercase{atem\'{a}tica}, FCFM\lowercase{y}N, UNSL, E\lowercase{j\'{e}rcito de los }A\lowercase{ndes 950, 5700 }S\lowercase{an} L\lowercase{uis,} A\lowercase{rgentina}}}
%


\begin{abstract}
We study the minimizers of the fusion frame potential in the case
that both the weights and the dimensions of the subspaces are fixed
and not necessarily equal. Using a concept of irregularity we
provide a description of the local (that are also global) minimizers
which projections are eigenoperators of the fusion frame operator.
This result will be related to the existence of tight fusion frames.
In this way we generalize results known for the classical vector
frame potential.

\bigskip

\bigskip

{\bf Key words:} Frames, Fusion frames, Fusion frame potential,
Tight fusion frames, Fundamental inequality, Irregularity.

\medskip

{\bf AMS subject classification:} Primary 42C15; Secondary 42C99,
42C40, 15A60.


\end{abstract}

\maketitle

\section{Introduction}

\textit{Fusion frames} (or \textit{frames of subspaces})
 for a separable Hilbert space $\mathcal{H}$ are collections of closed subspaces and weights,
that allow the reconstruction of each element of $\mathcal{H}$ from
packets of coefficients (see \cite{Casazza-Kutyniok (2004),
Casazza-Kutyniok-Li (2008)} and also \cite[Chapter
13]{Casazza-Kutyniok (2012)}). They are a generalization of
\textit{frames} \cite{Casazza (2000), {Casazza-Kutyniok (2012)},
Christensen (2003), Kovacevic-Chebira (2008)} and are useful in
situations where a distributed processing by combining locally data
vectors has to be implemented, such as distributing sensing,
parallel processing and packet encoding.

In the present paper we consider the finite-dimensional case.
Finite-dimensional Hilbert spaces and finite fusion frames
\cite{Casazza-Kutyniok (2012)} are an important tool in applications
since they prevent the approximation problems that appear with the
truncations in the infinite-dimensional case.





The concept of frame potential has been introduced by Benedetto and
Fickus in \cite{Benedetto-Fickus (2003)}. There the authors give a
complete description of the structure of the unit-norm  minimizers
of this potential, and then characterize the existence of normalized
tight frames in terms of these minimizers. This frame potential was
also considered in \cite{Casazza-Fickus-Kovacevic-Leon-Tremain
(2006)} where the results of \cite{Benedetto-Fickus (2003)} are
extended to the non-uniform length setting.

In \cite{Casazza-Fickus (2009)}, Casazza and Fickus introduced and
studied the notion of \textit{fusion frame potential}. This
potential has been also extensively studied in
\cite{Massey-Ruiz-Stojanoff (2010)}. In this paper we use a
different approach. We generalize results of
\cite{Casazza-Fickus-Kovacevic-Leon-Tremain (2006)} to the setting
of fusion frames. We study the minimizers of the fusion frame
potential in the case that both the weights and the dimensions of
the subspaces are fixed and not necessarily equal.

The organization of the paper is as follows. In section 2 we review
concepts about frames, fusion frames and fusion frame potential. In
Section 3 and 4, we provide a description of the local minimizers
which projections are eigenoperators of the fusion frame operator.
This description is completed in Section 5 using a concept of
irregularity and it is shown that these local minimizers are also
global. In section 6, we relate the study of the minimizers of the
fusion frame potential to the existence of tight fusion frames.


\section{Preliminaries}

In this section we recall the concepts of frame \cite{Casazza
(2000), Casazza-Kutyniok (2012), Christensen (2003),
Kovacevic-Chebira (2008)}, fusion frame \cite{Casazza-Kutyniok
(2004), Casazza-Kutyniok-Li (2008)} (see also \cite[Chapter
13]{Casazza-Kutyniok (2012)}) and fusion frame potential
\cite{Casazza-Fickus (2009), Massey-Ruiz-Stojanoff (2010)}. We refer
to the mentioned works for more details. We begin introducing some
notation.

\subsection{Notation}

Let $\mathbb{F}=\mathbb{R}$ or $\mathbb{F}=\mathbb{C}$. Let $d, n
\in \mathbb{N}$. Noting that any finite dimensional Hilbert space
over $\mathbb{F}$ of dimension $d$ is isomorphic to $\mathbb{F}^{d}$
with the usual inner product we will work directly with this last
Hilbert space.

We will identify the linear transformations from $\mathbb{F}^{d}$ to
$\mathbb{F}^{n}$ with their matrix representation with respect to
the standard bases of $\mathbb{F}^{d}$ and $\mathbb{F}^{n}$.
$\mathbb{F}^{d \times n}$ denotes the set of matrices of order $d
\times n$ with entries in $\mathbb{F}$. If $M \in \mathbb{F}^{d
\times n}$, then $R(M)$ and $M^{*}$ denotes the range and the
conjugate transpose of $M$, respectively. The elements of
$\mathbb{F}^{n}$ will be considered as column vectors, i.e., we
identify $\mathbb{F}^{n}$ with $\mathbb{F}^{n \times 1}$, and if $f
\in \mathbb{F}^{n}$ then $f(i)$ denotes the $i$th component of $f$.
Let $M \in \mathbb{F}^{d \times n}$. We denote the entry $i,j$ of
$M$ with $M(i,j)$ and the  $j$th column by $M(:,j).$ The inner
product and the norm in $\mathbb{F}^{d}$ will be denoted by
$\langle.,.\rangle$ and $\|.\|$, respectively. If $T \in
\mathbb{F}^{d\times n}$, then $\|T\|_{F}$ denotes the Frobenius norm
of $T$.

The set of subspaces of $\mathbb{F}^{d}$ will be denoted by
$\mathcal{S}$. If $V \in \mathcal{S}$, $\pi_{V} \in \mathbb{F}^{d
\times d}$ denotes the orthogonal projection onto $V$.

\subsection{Frames and fusion frames}

\begin{defn}\label{D F}
Let $\mathcal{F}=\{f_{k}\}_{k=1}^{K} \subset \mathbb{F}^{d}$.
\begin{enumerate}
  \item $\mathcal{F}$ is a \textit{frame} for $\mathbb{F}^{d}$ if
$\text{span}~\mathcal{F}=\mathbb{F}^{d}$.
  \item If $\mathcal{F}$ is a frame for $\mathbb{F}^{d},$

\centerline{ $S_{\mathcal{F}} : \mathbb{F}^{d} \rightarrow
\mathbb{F}^{d},$ $S_{\mathcal{F}}f=\sum_{k=1}^{K}\langle
f,f_{k}\rangle f_{k},$} \noindent is the \textit{frame operator} of
$\mathcal{F}$.
  \item $\mathcal{F}$ is an $\alpha$-\textit{tight frame}, if
$S_{\mathcal{F}}=\alpha I$. If $S_{\mathcal{F}}=I$, $\mathcal{F}$ is
a \textit{Parseval frame}.
\end{enumerate}
\end{defn}

If $\mathcal{F}$ is an $\alpha$-tight frame we have the following
reconstruction formula
\begin{equation}\label{E F tight formula de reconstruccion}
f=\frac{1}{\alpha}\sum_{k=1}^{K}\langle f,f_{k}\rangle f_{k}, \text{
for all $f \in \mathbb{F}^{d}$,}\end{equation} where the scalar
coefficients $\langle f,f_{k}\rangle$, $k = 1, \ldots, K$, can be
thought as a measure of the projection of $f$ onto each frame
vector.

Fusion frames generalize the notion of frames. Each $f \in
\mathbb{F}^{d}$ can be a represented via fusion frames is given by
projections onto multidimensional subspaces. Before defining them we
introduce the following concept.
\begin{defn}
Let $\{W_k\}_{k=1}^{K}$ be a family of subspaces of
$\mathbb{F}^{d}$, and let $\{w_{k}\}_{k=1}^{K}$ be a family of
weights, i.e., $w_{k}
> 0$ for $k=1,\ldots,K$. Then $\{(W_k,w_{k})\}_{k=1}^{K}$ is
called a \textit{Bessel fusion sequence} for $\mathbb{F}^{d}$.
\end{defn}

Throughout the article we will assume that $K$ is fixed and we
denote $\{W_{k}\}_{k=1}^{K}\subset\mathcal{S}$ with $\mathbf{W}$,
$\{w_{k}\}_{k=1}^{K}$ with $\mathbf{w}$ and
$\{(W_k,w_{k})\}_{k=1}^{K}$ with $(\mathbf{W},\mathbf{w})$. If
$w_{1} = \dots = w_{K}=w$ we write $w$ instead of $\mathbf{w}$. The
set of Bessel fusion sequences in $\mathbb{F}^{d}$ with $K$
subspaces of $\mathbb{F}^{d}$ and weights will be denoted with
$\mathcal{B}_{K}$.

Let $\mathcal{W}:=\bigoplus_{k=1}^{K} W_k = \{(f_{k})_{k=1}^{K}:
f_{k} \in W_k\}$ be the Hilbert space with
$$\langle(f_{k})_{k=1}^{K},(g_{k})_{k=1}^{K}\rangle=\sum_{k=1}^{K}\langle f_{k}, g_{k}\rangle.$$
\begin{defn}\label{D FF}
Let $(\mathbf{W},\mathbf{w}) \in \mathcal{B}_{K}$.
\begin{enumerate}
  \item $(\mathbf{W},\mathbf{w})$ is called a
  \textit{fusion frame} for $\mathbb{F}^{d}$ if
$\text{span}\bigcup_{k=1}^{K}W_k=\mathbb{F}^{d}$.
  \item $(\mathbf{W},\mathbf{w})$ is a \textit{Riesz
fusion basis} if $\mathbb{F}^{d}$ is the direct sum of the $W_k$,
and $(\mathbf{W},1)$ an \textit{orthonormal fusion basis} if
$\mathbb{F}^{d}$ is the orthogonal sum of the $W_k.$
  \item If $(\mathbf{W},\mathbf{w})$  is a fusion frame for $\mathbb{F}^{d}$, the
operator

\centerline{ $S_{\mathbf{W},\mathbf{w}}: \mathbb{F}^{d} \rightarrow
\mathbb{F}^{d}\text{ ,
}S_{\mathbf{W},\mathbf{w}}(f)=\sum_{k=1}^{K}w_{k}^{2}\pi_{W_k}(f)$}

\noindent is called the \textit{fusion frame operator} of
$(\mathbf{W},\mathbf{w})$.

  \item A fusion frame $(\mathbf{W},\mathbf{w})$ is called an
$\alpha$-\textit{tight fusion frame} if
$S_{\mathbf{W},\mathbf{w}}=\alpha I$. If
$S_{\mathbf{W},\mathbf{w}}=I$ we say that it is a \textit{Parseval
fusion frame}.
\end{enumerate}
\end{defn}

If $(\mathbf{W},\mathbf{w})$ is an $\alpha$-tight fusion frame we
have the following reconstruction formula
\begin{equation}\label{E FF tight formula de reconstruccion}
f=\frac{1}{\alpha}\sum_{k=1}^{K}w_{k}^{2}\pi_{W_k}(f), \text{ for
all $f \in \mathbb{F}^{d}$.}
\end{equation}

\subsection{The fusion frame potential}

The set of sequences of $K$ subspaces of $\mathbb{F}^{d}$ will be
denoted with $\mathcal{S}_{K}$. In what follows we assume that the
weights $\mathbf{w}$ are fixed. Let $K \in \mathbb{N}$ and
$\mathbf{L}=(L_{1},\dots,L_{K}) \in \mathbb{N}^{K}$. If $L_{1} =
\dots = L_{K}=L$ we write $L$ instead of $\mathbf{L}$. We consider
the set

\centerline{ $\mathcal{S}_{K}(\mathbf{L}):=\{\mathbf{W} \in
\mathcal{S}_K: \text{dim}(W_{k}) = L_k \text{ for every } k \in \{1,
\ldots, K\}\}$.}

We define the distance between $W, V \in \mathcal{S}$ as
$$d(W, V)=\|\pi_{W}-\pi_{V}\|_{F}$$ and the distance between $\mathbf{W}, \mathbf{V} \in
\mathcal{S}_{K}$ as
$$d(\mathbf{W}, \mathbf{V})=\left[\sum_{k=1}^{K}\|\pi_{W_{k}}-\pi_{V_{k}}\|_{F}^{2}\right]^{1/2}.$$

With this metric the set $\mathcal{S}_{K}(\mathbf{L})$ is compact
(this follows from \cite[Lemma 4.3.1]{Massey-Ruiz-Stojanoff
(2010)}).

Following \cite{Casazza-Fickus (2009)} we define the fusion frame
potential
$FFP_{\mathbf{w}}:\mathcal{S}_{K}(\mathbf{L})\rightarrow\mathbb{R}$
as
\begin{equation}\label{E FFP}
FFP_{\mathbf{w}}(\mathbf{W})=\text{tr}(S_{\mathbf{W},\mathbf{w}}^{2}).
\end{equation}

Note that

\centerline{$\text{tr}(S_{\mathbf{W},\mathbf{w}}^{2})=\text{tr}(\displaystyle\sum_{k=1}^Kw_k^2\pi_{W_{k}}\displaystyle\sum_{k=1}^Kw_k^2\pi_{W_{k}})=\displaystyle\sum_{k,k^{\prime}=1}^Kw_k^2w_{k\prime}^2\text{tr}(\pi_{W_{k}}\pi_{W_{k^{\prime}}})$}

\noindent and each term in the above sum is a nonnegative real
number since

\begin{align*}
\text{tr}(\pi_{W_{k}}\pi_{W_{k^{\prime}}})&=\text{tr}(\pi_{W_{k}}^{2}\pi_{W_{k^{\prime}}}^{2})=\text{tr}(\pi_{W_{k}}\pi_{W_{k^{\prime}}}\pi_{W_{k^{\prime}}}\pi_{W_{k}})=\text{tr}(\pi_{W_{k}}^{*}\pi_{W_{k^{\prime}}}^{*}\pi_{W_{k^{\prime}}}\pi_{W_{k}})\\&=\text{tr}((\pi_{W_{k^{\prime}}}\pi_{W_{k}})^{*}\pi_{W_{k^{\prime}}}\pi_{W_{k}})=\|\pi_{W_{k^{\prime}}}\pi_{W_{k}}\|_{F}^{2}.
\end{align*}

Analogous as in \cite[Proposition 1]{Casazza-Fickus (2009)} the
following can be proved:

\begin{prop}\label{cotainf}
If $\mathbf{W} \in \mathcal{S}_{K}(\mathbf{L})$, then
\begin{equation}\label{ci}
FFP_{\mathbf{w}}(\mathbf{W})\geq
\frac{1}{d}\left(\sum_{k=1}^{K}w_k^2L_k\right)^2,
\end{equation}
with equality holding in (\ref{ci}) if and only if
$(\mathbf{W},\mathbf{w})$ is a tight fusion frame for
$\mathbb{F}^{d}.$
\end{prop}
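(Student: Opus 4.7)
The plan is to follow the eigenvalue argument used for Proposition~1 in \cite{Casazza-Fickus (2009)}, which reduces the inequality to the Cauchy--Schwarz inequality applied to the spectrum of the fusion frame operator. First I would observe that $S_{\mathbf{W},\mathbf{w}} = \sum_{k=1}^{K} w_k^2 \pi_{W_k}$ is a self-adjoint positive semidefinite operator on $\mathbb{F}^d$, being a nonnegative linear combination of orthogonal projections. Let $\lambda_1, \ldots, \lambda_d$ denote its (nonnegative) eigenvalues counted with multiplicity.

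Next I would compute the trace of $S_{\mathbf{W},\mathbf{w}}$ in two different ways. On the spectral side, $\operatorname{tr}(S_{\mathbf{W},\mathbf{w}}) = \sum_{i=1}^d \lambda_i$. On the other hand, using linearity of the trace and the fact that $\operatorname{tr}(\pi_{W_k}) = \dim(W_k) = L_k$ for an orthogonal projection, one gets
\begin{equation*}
\operatorname{tr}(S_{\mathbf{W},\mathbf{w}}) = \sum_{k=1}^{K} w_k^2 \operatorname{tr}(\pi_{W_k}) = \sum_{k=1}^{K} w_k^2 L_k.
\end{equation*}
Similarly, $FFP_{\mathbf{w}}(\mathbf{W}) = \operatorname{tr}(S_{\mathbf{W},\mathbf{w}}^2) = \sum_{i=1}^d \lambda_i^2$ by the spectral theorem.

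Now the inequality follows from Cauchy--Schwarz applied to the vectors $(\lambda_1, \ldots, \lambda_d)$ and $(1, \ldots, 1)$ in $\mathbb{R}^d$:
\begin{equation*}
\left(\sum_{i=1}^d \lambda_i\right)^2 \leq d \sum_{i=1}^d \lambda_i^2,
\end{equation*}
which rearranges to the desired bound after substituting the two trace expressions above.

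For the equality characterization, I would recall that equality in the Cauchy--Schwarz step occurs precisely when all eigenvalues $\lambda_i$ coincide, that is, $S_{\mathbf{W},\mathbf{w}} = \alpha I$ for some $\alpha \geq 0$. Since $w_k > 0$ and $L_k \geq 1$ force $\operatorname{tr}(S_{\mathbf{W},\mathbf{w}}) = \sum_k w_k^2 L_k > 0$, the common eigenvalue $\alpha$ must be strictly positive, whence $S_{\mathbf{W},\mathbf{w}}$ is invertible, which in turn guarantees $\operatorname{span}\bigcup_{k=1}^K W_k = \mathbb{F}^d$ so that $(\mathbf{W},\mathbf{w})$ is genuinely a fusion frame and indeed an $\alpha$-tight one in the sense of Definition~\ref{D FF}. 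Conversely, if $(\mathbf{W},\mathbf{w})$ is tight then $S_{\mathbf{W},\mathbf{w}} = \alpha I$, all $\lambda_i$ are equal, and equality holds throughout. There is no real obstacle here; the only delicate point is making sure the equality case also delivers the fusion frame property (spanning) and not merely tightness of the operator, which is handled by the positivity of $\alpha$.
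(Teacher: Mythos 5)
Your proof is correct and is essentially the argument the paper has in mind when it refers to Proposition~1 of \cite{Casazza-Fickus (2009)}: compute $\operatorname{tr}(S_{\mathbf{W},\mathbf{w}})=\sum_k w_k^2L_k$ and $FFP_{\mathbf{w}}(\mathbf{W})=\sum_i\lambda_i^2$, apply Cauchy--Schwarz to the spectrum, and characterize equality by $S_{\mathbf{W},\mathbf{w}}=\alpha I$. Your extra remark that $\alpha>0$ forces the spanning (fusion frame) property is a correct and welcome detail.
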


\section{The orthogonal projections as eigenoperators
of the fusion frame operator}

From now on we denote the different eigenvalues of
$S_{\mathbf{W},\mathbf{w}}$ listed in decreasing order with
$\{\lambda_j\}_{j=1}^J$ and the corresponding sequence of
eigenspaces with $\{E_j\}_{j=1}^J$. We consider the index sets
$$I_j=\{k\in\{1,\ldots,K\}:S_{\mathbf{W},\mathbf{w}}\pi_{W_{k}}=\lambda_j\pi_{W_{k}}\}.$$

We define the class of Bessel fusion sequences for which the
orthogonal projections are ``eigenoperators" of the fusion frame
operator:
$$\mathcal{E}=\{(\mathbf{W},\mathbf{w})\in \mathcal{B}_{K}: \forall k\in\{1,\ldots,K\}\,\, \exists j \in \{1, \ldots,
J\} \text{ such that }
S_{\mathbf{W},\mathbf{w}}\pi_{W_{k}}=\lambda_j\pi_{W_{k}}\}.$$ For
$L_1=\cdots=L_K=1,$  it can be concluded from the vectorial case
that every local minimizer of
$FFP_{\mathbf{w}}:\mathcal{S}_{K}(1)\rightarrow\mathbb{R}$ belongs
to $\mathcal{E}$ (see \cite{Casazza-Fickus-Kovacevic-Leon-Tremain
(2006)}). It is natural to ask if this is true  in the fusion frame
setting for arbitrary $\mathbf{L}.$ The answer is no (we will deal
with this question in Remark~\ref{contraejemplo}), but one can
assert something weaker. Let $\mathbf{W}$ be a local minimizer of
$FFP_{\mathbf{w}}$. Theorem~4 in \cite{Casazza-Fickus (2009)} says
that there exists a generating family
$\{\{f_{k,l}\}_{l=1}^{L_k}\}_{k=1}^{K}$ of
$\{\pi_{W_{k}}\}_{k=1}^{K}$ (i.e. for each  $k = 1, \ldots, K$,
$\{f_{k,l}\}_{l=1}^{L_k}$ is an orthonormal basis for $W_k$) that
consists of eigenvectors of $S_{\mathbf{W},\mathbf{w}},$ and that
the elements of this family which are in $E_{j}$ are a
$\lambda_{j}$-tight frame for $E_{j}$.

The next theorem establishes properties of the index sets $I_j$
along with properties of the families $\{W_k\}_{k \in I_j},$
$j=1,\ldots,J$ for pairs $(\mathbf{W},\mathbf{w})\in \mathcal{E}$:

\begin{thm}\label{T particion minimizadores FFP en TFF}
Let $(\mathbf{W},\mathbf{w})\in \mathcal{E}.$

The following assertions hold:
\begin{enumerate}
  \item $\bigcup_{j=1}^{J}I_j=\{1,\ldots,K\}$.
  \item $k \in I_j$ if and only if $W_k \subseteq
E_j.$
  \item If $j\not=j^{\prime}$, then $I_j\cap I_{j^{\prime}}=\emptyset$.
  \item $\lambda_{J}=\frac{1}{\text{dim}(E_{J})}\sum_{k \in
  I_{J}}w_{k}^{2}L_{k}$.
  \item For all
$j=1,\ldots,J$, $\{W_k\}_{k \in I_j}$ is a $\lambda_j$-tight fusion
frame for $E_j$.
  \item $I_j = \emptyset$ if and only if $\lambda_{j} = 0$.
\end{enumerate}
\end{thm}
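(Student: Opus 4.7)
The plan is to dispatch items (1)--(3) directly from the definitions, then to establish a single operator identity $\sum_{k \in I_j} w_k^2 \pi_{W_k} = \lambda_j \pi_{E_j}$, from which (4), (5) and (6) all fall out.

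Item (1) is just the definition of $\mathcal{E}$: each $k$ belongs to at least one $I_j$. For (2), if $k \in I_j$ and $f \in W_k$, then $\pi_{W_k} f = f$ gives $S_{\mathbf{W},\mathbf{w}} f = S_{\mathbf{W},\mathbf{w}} \pi_{W_k} f = \lambda_j f$, so $W_k \subseteq E_j$; conversely, $W_k \subseteq E_j$ implies the range of $\pi_{W_k}$ lies in $E_j$, whence $S_{\mathbf{W},\mathbf{w}}\pi_{W_k} f = \lambda_j \pi_{W_k} f$ for every $f$. Item (3) then follows because a common index $k \in I_j \cap I_{j'}$ would force $W_k \subseteq E_j \cap E_{j'} = \{0\}$, contradicting $L_k \geq 1$.

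The heart of the argument is the operator identity. Since $S_{\mathbf{W},\mathbf{w}}$ is self-adjoint, distinct eigenspaces are mutually orthogonal. Combined with (2), for $k \notin I_j$ we have $W_k \subseteq E_j^{\perp}$, so $\pi_{W_k}$ kills every vector in $E_j$; likewise, for $k \in I_j$, $W_k \subseteq E_j$, so $\pi_{W_k}$ kills $E_j^{\perp}$. Consequently, for $f \in E_j$ the identity $\lambda_j f = S_{\mathbf{W},\mathbf{w}} f = \sum_k w_k^2 \pi_{W_k} f$ collapses to $\sum_{k \in I_j} w_k^2 \pi_{W_k} f = \lambda_j f$, while both sides vanish on $E_j^{\perp}$. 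This yields $\sum_{k \in I_j} w_k^2 \pi_{W_k} = \lambda_j \pi_{E_j}$, which (upon restriction to $E_j$) is exactly the statement that $\{W_k\}_{k \in I_j}$ is a $\lambda_j$-tight fusion frame for $E_j$ whenever $I_j \neq \emptyset$ (the operator being a nonzero multiple of $I_{E_j}$ is invertible on $E_j$, so the span of the subspaces fills $E_j$), proving (5). Taking the trace at $j = J$ and using $\text{tr}(\pi_{W_k}) = L_k$, $\text{tr}(\pi_{E_J}) = \dim(E_J)$ gives (4).

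Finally, for (6): the converse direction is immediate from the identity—if $I_j = \emptyset$ then $\lambda_j \pi_{E_j} = 0$, forcing $\lambda_j = 0$ since $\dim E_j \geq 1$. For the forward direction, I use the positivity bound $\langle S_{\mathbf{W},\mathbf{w}} f, f\rangle = \sum_{k'} w_{k'}^2\|\pi_{W_{k'}} f\|^2 \geq w_k^2 \|\pi_{W_k} f\|^2$: if $k \in I_j$ and $\lambda_j = 0$, a nonzero $f \in W_k$ would yield $0 = \lambda_j \|f\|^2 = \langle S_{\mathbf{W},\mathbf{w}} f, f \rangle \geq w_k^2 \|f\|^2 > 0$, a contradiction. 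I do not anticipate serious obstacles; the only real content is the orthogonality observation that collapses the double sum, and this is a one-line consequence of self-adjointness of $S_{\mathbf{W},\mathbf{w}}$.
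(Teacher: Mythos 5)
Your proof is correct and takes essentially the same route as the paper's: items (1)--(3) are handled identically, and your operator identity $\sum_{k\in I_j}w_k^2\pi_{W_k}=\lambda_j\pi_{E_j}$ is exactly the paper's collapsed sum underlying (5). The only cosmetic differences are that the paper obtains (4) by the same trace computation written out basis-by-basis, and proves (6) via $E_{j}\subseteq R(T_{\mathbf{W},\mathbf{w}})^{\bot}=N(S_{\mathbf{W},\mathbf{w}})$ rather than your positivity bound; both are equivalent one-line arguments.
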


\begin{proof}
(1) It follows from the fact that $(\mathbf{W},\mathbf{w})\in
\mathcal{E}$ and the definition of $I_{j}$.

(2) If $k \in I_j$ and $f \in W_k$, then
$S_{\mathbf{W},\mathbf{w}}f=S_{\mathbf{W},\mathbf{w}}\pi_{W_{k}}f=\lambda_j\pi_{W_{k}}f=\lambda_jf$.
Thus $f \in E_j$. Conversely, suppose that $W_k \subseteq E_j$. If
$f \in \mathcal{H}$,
$S_{\mathbf{W},\mathbf{w}}\pi_{W_{k}}f=\lambda_j\pi_{W_{k}}f$. Thus
$k \in I_j$.

(3) It follows from (2) and the orthogonality of the eigenspaces
$E_j$.

(4) Let $\{e_{l}\}_{l=1}^{\text{dim}(E_{J})}$ be an orthonormal
basis for $E_{J}$ and $\{e_{l}\}_{l=\text{dim}(E_{J})+1}^{d}$ be an
orthonormal basis for $E_{J}^{\bot}$. By part (2), $P_{k}e_{l}=0$
for $k \in I_{J}$ and $l\in \{\text{dim}(E_{J})+1, \ldots, d\}$, and
$P_{k}e_{l}=0$ for $k \notin I_{J}$ and $l \in \{1, \ldots,
\text{dim}(E_{J})\}$, hence
\begin{align*}
\sum_{k \in I_{J}}w_{k}^{2}L_{k} &= \sum_{k \in
I_{J}}w_{k}^{2}\text{tr}(P_{k}) = \sum_{k \in I_{J}}w_{k}^{2}\sum_{l=1}^{d}\langle P_{k}e_{l},e_{l} \rangle =  \sum_{k \in I_{J}}w_{k}^{2}\sum_{l=1}^{\text{dim}(E_{J})}\langle P_{k}e_{l},e_{l} \rangle\\
   &=  \sum_{l=1}^{\text{dim}(E_{J})}\langle \sum_{k=1}^{K}w_{k}^{2}P_{k}e_{l},e_{l}
   \rangle = \sum_{l=1}^{\text{dim}(E_{J})}\langle S_{\mathbf{W},\mathbf{w}}e_{l},e_{l}
   \rangle\\
   &= \lambda_{J}\sum_{l=1}^{\text{dim}(E_{J})}\langle I_{\mathcal{H}}e_{l},e_{l}\rangle = \lambda_{J}\sum_{l=1}^{\text{dim}(E_{J})}\| e_{l}\|^{2} =
\lambda_{J} \text{dim}(E_{J}).
\end{align*}

(5) Let $j\in\{1,\ldots,J\}$ and $f\in E_j$. By (1) and (2), if $k
\in \{1, \ldots, K\} \cap I_{j}^{c}$ then $W_k\subseteq
E_{j^{\prime}}$ for some $j^{\prime}\not=j$ and consequently
$\pi_{W_{k}}f=0$. So,
$$\lambda_j f = S_{\mathbf{W},\mathbf{w}}f= \sum_{k=1}^Kw_k^2\pi_{W_{k}}f
= \sum_{k \in I_j}w_k^2\pi_{W_{k}}f.$$ This shows that $\{W_{k}\}_{k
\in I_j}$ is a $\lambda_j$-tight fusion frame for $E_j$.

(6) We have $I_{j_{0}} = \emptyset$ if and only of $\forall k \in
\{1, \ldots, K\}, W_{k} \subseteq E_{j_{0}}^{\bot}$, or
equivalently, $E_{j_{0}} \subseteq
R(T_{\mathbf{W},\mathbf{w}})^{\bot} = N(S_{\mathbf{W},\mathbf{w}})$
i. e. $\lambda_{j_{0}} = 0 $
\end{proof}

\section{Minimizers in $\mathcal{E}$}

In the vectorial case every sequence, which is a local minimizer of
the frame potential, consists of elements that are eigenvectors of
the frame operator. Motivated by that, in this section we describe
the structure of the local minimizers $\mathbf{W}$ of the fusion
frame potential with fixed dimensions and weights such that
$(\mathbf{W}, \mathbf{w})\in \mathcal{E}.$ In this case we can give
a complete description as it was done in \cite{Benedetto-Fickus
(2003), Casazza-Fickus-Kovacevic-Leon-Tremain (2006)} for the frame
potential. Moreover, as in this works, we relate these results to
the existence of tight fusion frames in terms of a fundamental
inequality that must be satisfied by the weights and the dimension
of the subspaces.

To prove Theorem~\ref{T particion minimizadores FFP en sumas
directas} we use the following auxiliary Lemma.

\begin{lem}\label{L D R(A)}
Let $\{f_{l}\}_{l=1}^{L} \subset \mathbb{F}^{d}$ be an orthonormal
set. Let $\{z_{l}\}_{l=1}^{L} \subset \mathbb{C}$ be a nonzero
sequence such that $|z_{l}|\leq \frac{1}{2}$ for each $l = 1,
\ldots, L$. Let $h \in \mathbb{F}^{d}$ such that $\|h\|=1$ and
$\langle h, f_{l}\rangle = 0$ for each $l=1, \ldots, L$. For each
$t\in (-1,1)$, let the matrices $A(t)$, $F$, $H$, $\widetilde{F}$
and $D$ given by

\centerline{$A(t)(:,l)=(1-t^2|z_{l}|^2)^{\frac{1}{2}}f_{l}+tz_{l}h$
,}

\centerline{$F(:,l)=f_{l}$,}

\centerline{$H(:,l)=z_{l}h$,}

\centerline{$\widetilde{F}(:,l)=-|z_{l}|^{2}f_{l},$}

\noindent for $l=1, \ldots, L,$ and

\centerline{$D(l,l')=-2\delta_{l,l'}z_{l}\overline{z_{l'}}$ for $l,
l' = 1, \ldots, L$.}

\noindent Then
\begin{enumerate}
  \item $\text{dim}(\text{R}(A(t)))=L$.
  \item $\frac{d}{dt}\pi_{\text{R}(A(t))}|_{t=0}=HF^{*}+FH^{*}$.
  \item
  $\frac{d^{2}}{dt^{2}}\pi_{\text{R}(A(t))}|_{t=0}=\widetilde{F}F^{*}+2HH^{*}+FDF^*+F\widetilde{F}^{*}$.
\end{enumerate}
\end{lem}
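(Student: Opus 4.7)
The plan is to use the explicit projection formula $\pi_{R(A(t))} = A(t)\bigl(A(t)^*A(t)\bigr)^{-1}A(t)^*$, which is legitimate as soon as $A(t)$ has full column rank, and then Taylor-expand each factor at $t=0$. For part (1), I would compute the Gramian $G(t):=A(t)^*A(t)$ directly, using the relations $F^*F = I_L$, $F^*h = 0$, and $h^*h = 1$ (all immediate from the hypotheses), to obtain the closed form
\[
G(t) \;=\; \mathrm{diag}\!\bigl(1-t^2|z_l|^2\bigr) \;+\; t^2\,\overline{z}\,z^T,
\]
where $z=(z_1,\dots,z_L)^T$. Since $|z_l|\le 1/2$ and $|t|<1$ force the diagonal part to be strictly positive, while the rank-one perturbation is positive semidefinite, $G(t)$ is positive definite on $(-1,1)$, so $A(t)$ has rank $L$.

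For (2) and (3) I would compute the derivatives of the three factors at $t=0$. Entrywise differentiation of the column formula gives $A(0)=F$, $A'(0)=H$, and $A''(0)=\widetilde{F}$, with the last identity using $\frac{d^2}{dt^2}(1-t^2|z_l|^2)^{1/2}\big|_{t=0}=-|z_l|^2$. From the closed form of $G(t)$ one reads off $G(0)=I$, $G'(0)=0$, and $G''(0)=2B$ where $B:=\overline{z}z^T-\mathrm{diag}(|z_l|^2)$. Differentiating $G(t)^{-1}G(t)=I$ once and twice then yields $(G^{-1})'(0)=0$ and $(G^{-1})''(0)=-G''(0)=-2B$.

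Now apply the product rule to the triple product $\pi(t)=A(t)\,G(t)^{-1}A(t)^*$. Because $(G^{-1})'(0)=0$, the cross terms passing through $G^{-1}$ vanish in the first derivative, leaving $\pi'(0)=A'(0)F^*+F(A^*)'(0)=HF^*+FH^*$, which is~(2). For~(3) the six-term second-derivative expansion similarly loses the $(G^{-1})'(0)$ pieces and collapses to
\[
\pi''(0)\;=\;\widetilde{F}F^*\;+\;F\widetilde{F}^*\;+\;2HH^*\;-\;2\,FBF^*.
\]
The proof is completed by identifying $-2FBF^*$ with $FDF^*$, a matrix identity that is checked entrywise using $F^*F=I_L$.

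The main obstacle I anticipate is precisely this final identification. Expanding $-2FBF^* = -2F\overline{z}z^TF^* + 2F\,\mathrm{diag}(|z_l|^2)\,F^*$ splits the contribution into a rank-one ``off-diagonal'' piece and a diagonal correction; the diagonal correction must cancel against what already appears in $\widetilde{F}F^*+F\widetilde{F}^*$, while the remaining off-diagonal rank-one piece is what gets repackaged into $FDF^*$. Keeping the conjugations straight in the complex setting and matching the resulting Hermitian form to the stated $D$ is the only non-routine bookkeeping; once that is done, assertions (1)--(3) all fall out of the computations above.
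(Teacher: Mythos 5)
Your route is essentially the paper's: write $\pi_{\text{R}(A(t))}=A(t)[A^{*}(t)A(t)]^{-1}A^{*}(t)$ and differentiate the three factors at $t=0$. Your handling of the Gramian is in fact cleaner than the paper's: from the closed form $G(t)=I+t^{2}B$, where $B=\overline{z}z^{T}-\mathrm{diag}(|z_{l}|^{2})$ is the off-diagonal part of $\overline{z}z^{T}$, you get $(G^{-1})'(0)=0$ and $(G^{-1})''(0)=-2B$ in one line, whereas the paper writes out the entries of $[A^{*}(t)A(t)]^{-1}$ explicitly (sums over index subsets) to extract the same two facts. Your proof of (1) via positive definiteness of $G(t)$ is fine (the paper instead checks linear independence of the columns directly), and your values $A(0)=F$, $A'(0)=H$, $A''(0)=\widetilde{F}$, hence $\pi'(0)=HF^{*}+FH^{*}$ and $\pi''(0)=\widetilde{F}F^{*}+F\widetilde{F}^{*}+2HH^{*}-2FBF^{*}$, are all correct.

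The one place your write-up goes astray is the final identification, and the underlying cause is a misprint in the lemma: the stated $D(l,l')=-2\delta_{l,l'}z_{l}\overline{z_{l'}}$ should be $-2(1-\delta_{l,l'})z_{l}\overline{z_{l'}}$, i.e.\ $D$ is precisely $(G^{-1})''(0)=-2B$ (this is what the paper's own expansion of $[A^{*}A]^{-1}$ gives, and it is forced by the later use of $\mathrm{tr}(D_{k})=0$, which fails for the diagonal version). With that corrected $D$, the identity $FDF^{*}=-2FBF^{*}$ is immediate and nothing needs to be ``repackaged'' or cancelled against $\widetilde{F}F^{*}+F\widetilde{F}^{*}$. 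With the literal diagonal $D$, the identity you assert is simply false: since $F$ is injective with $F^{*}F=I$, $FDF^{*}=-2FBF^{*}$ would force $D=-2B$. Moreover the repair you sketch is internally inconsistent: the diagonal correction $2F\,\mathrm{diag}(|z_{l}|^{2})F^{*}$ does cancel $\widetilde{F}F^{*}+F\widetilde{F}^{*}$ exactly, so those terms could not also survive in the final formula, and what remains, $2HH^{*}-2F\overline{z}z^{T}F^{*}$, does not match statement (3) read with the diagonal $D$. So: your computation is right and matches the intended statement; state plainly that $(G^{-1})''(0)=-2B$ equals the (corrected) $D$ and conclude, rather than trying to bend the result to the misprinted formula. (The conjugation placement in the paper's Gramian is also slightly off, but that is immaterial downstream.)
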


\begin{proof}
To prove (1) we are going to see that the columns of $A(t)$ are
linearly independent. To see this, let $\{c_{l}\}_{l=1}^{L} \subset
\mathbb{C}$ be such that
$\displaystyle\sum_{l=1}^{L}c_lA(t)(:,l)=0$, or equivalently,
$\displaystyle\sum_{l=1}^{L}c_l(1-t^2|z_{l}|^2)^{\frac{1}{2}}f_{l}+t\displaystyle\sum_{l=1}^{L}c_lz_{l}h=0$.
Let $l_{0} \in \{1, \ldots, L\}$. Taking into account that $\langle
f_{l_0},f_{l}\rangle=\delta_{l,l_0}$ for all $l \in \{1, \ldots,
L\}$ and $\langle f_{l_0},h \rangle=0$, then $0=\langle
f_{l_0},\displaystyle\sum_{l=1}^{L}c_l(1-t^2|z_{l}|^2)^{\frac{1}{2}}f_{l}+t\displaystyle\sum_{l=1}^{L}c_lz_{l}h\rangle=c_{l_0}(1-t^2|z_{l}|^2)^{\frac{1}{2}}$.
Since $t\in (-1,1)$ and $|z_{l}|\leq\frac{1}{2}$ for each $k \in
\{1, \ldots, K\}$ and $l=1,\ldots, L$, it follows that $c_{l_0}=0$.
Now we are going to prove (2) and (3). We begin noting that
\begin{equation}\label{E derivada Ak en 0}
\frac{d}{dt}A(t)|_{t=0}=H
\end{equation}
\begin{equation}\label{E derivada segunda Ak en 0}
\frac{d^2}{dt^2}A(t)|_{t=0}=\widetilde{F}
\end{equation}
and

\centerline{$[A^*(t)A(t)](l,l')=\delta_{l,l'}+(1-\delta_{l,l'})t^2z_{l}\overline{z_{l'}}
\text{ for $l,l' = 1, \ldots, L$}.$}

\noindent Let $\mathbb{L}:=\{1, \ldots,L\}$, $B \subset \mathbb{L}$,
$\mathbb{L}^{s}\setminus B=\{(l_1,\ldots,l_s) \in \mathbb{L}
\setminus B \times \ldots \times \mathbb{L} \setminus B : l_1 <
\ldots < l_s\}$. We write $\mathbb{L}^{s}\setminus
\emptyset:=\mathbb{L}^{s}$. It is easy to check that the entry
$(i,i)$ of $[A^*(t)A(t)]^{-1}$ is

\centerline{$\frac{1+\displaystyle\sum_{s=2}^{L-1}(-1)^{s-1}(s-1)t^{2s}\sum_{(l_1,\ldots,l_s)\in
\mathbb{L}^{s}\setminus\{i\}}|z_{l_1}|^{2}\ldots|z_{l_s}|^{2}}{1+\displaystyle\sum_{s=2}^{L}(-1)^{s-1}(s-1)t^{2s}\sum_{(l_1,\ldots,l_s)\in
\mathbb{L}^{s}}|z_{l_1}|^{2}\ldots|z_{l_s}|^{2}}$}

\noindent and the entry $(i,j)$ with $i \neq j$ is

\centerline{$\frac{-t^{2}\bar{z}_{i}z_{j}\left(1+\displaystyle\sum_{s=1}^{L-2}(-1)^{s}t^{2s}\sum_{{(l_1,\ldots,l_s)\in
\mathbb{L}^{s}}\setminus\{i,j\}}|z_{l_1}|^{2}\ldots|z_{l_s}|^{2}\right)}{1+\displaystyle\sum_{s=2}^{L}(-1)^{s-1}(s-1)t^{2s}\sum_{(l_1,\ldots,l_s)\in
\mathbb{L}^{s}}|z_{l_1}|^{2}\ldots|z_{l_s}|^{2}}.$}

\noindent Therefore,
\begin{equation}\label{E derivada inversa Ak* Ak en 0}
\frac{d}{dt}([A^{\ast}(t)A(t)]^{-1})|_{t=0}=0
\end{equation}
and
\begin{equation}\label{E derivada segunda inversa Ak* Ak en 0}
\frac{d^{2}}{dt^{2}}([A^{\ast}(t)A(t)]^{-1})|_{t=0}=D.
\end{equation}
We have
$\pi_{\text{R}(A(t))}=A(t)[A^{\ast}(t)A(t)]^{-1}A^{\ast}(t)$. Thus,
from (\ref{E derivada Ak en 0}), (\ref{E derivada segunda Ak en 0}),
(\ref{E derivada inversa Ak* Ak en 0}) and (\ref{E derivada segunda
inversa Ak* Ak en 0}) we obtain (2) and (3).
\end{proof}

\begin{thm}\label{T particion minimizadores FFP en sumas directas}
If $\mathbf{W}$ is a local minimizer of $FFP_{\mathbf{w}}$  such
that $(\mathbf{W},\mathbf{w})\in \mathcal{E}$, then $\{W_k\}_{k \in
I_j}$ is a direct sum for all $j<J$.
\end{thm}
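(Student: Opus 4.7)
The plan is to argue by contradiction: if $\{W_k\}_{k \in I_j}$ is not a direct sum for some $j < J$, I will produce a smooth path $\mathbf{W}(t)$ in $\mathcal{S}_K(\mathbf{L})$ through $\mathbf{W}$ along which the fusion frame potential has strictly negative second derivative at $t=0$, contradicting local minimality.

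Failure of the direct sum yields vectors $v_k \in W_k$, $k \in I_j$, not all zero, with $\sum_{k \in I_j} v_k = 0$. Using $j<J$ I fix a unit $h \in E_J$; by Theorem~\ref{T particion minimizadores FFP en TFF}(2) every $W_k$ with $k \in I_j$ lies in $E_j \perp E_J$, so $h \perp W_k$ and $S_{\mathbf{W},\mathbf{w}}h = \lambda_J h$. I set $u_k := c\,v_k/w_k^2$ for $k \in I_j$ (and $u_k := 0$ otherwise), with $c>0$ a scaling constant. For each $k$ with $u_k \neq 0$ I fix an orthonormal basis $\{f_{k,l}\}_{l=1}^{L_k}$ of $W_k$ and write $u_k = \sum_l z_{k,l} f_{k,l}$; taking $c$ small ensures $|z_{k,l}| \leq 1/2$, so Lemma~\ref{L D R(A)} with data $(\{f_{k,l}\}, \{z_{k,l}\}, h)$ defines a perturbation $W_k(t) := R(A_k(t))$. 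For the remaining indices I set $W_k(t) := W_k$.

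With $F_k, H_k, \widetilde{F}_k, D_k$ as in Lemma~\ref{L D R(A)} and $A_k := \frac{d}{dt}\pi_{W_k(t)}|_0$, $B_k := \frac{d^2}{dt^2}\pi_{W_k(t)}|_0$, the key algebraic facts are (i) $S_{\mathbf{W},\mathbf{w}} F_k = \lambda_j F_k$ for $k \in I_j$ (Theorem~\ref{T particion minimizadores FFP en TFF}(2)), and (ii) $F_k^* H_{k'} = 0$ and $H_k^* F_{k'} = 0$ for all $k, k' \in I_j$ (from $h \perp W_k, W_{k'}$). Expanding $\mathrm{tr}(S_{\mathbf{W}(t),\mathbf{w}}^2)$ to order $t^2$ and applying these identities, the first-order term $2\sum_k w_k^2 \mathrm{tr}(S_{\mathbf{W},\mathbf{w}} A_k)$ vanishes, while the second-order term, after cyclic rearrangements of traces, reduces to
\begin{equation*}
\frac{d^2}{dt^2} FFP_{\mathbf{w}}(\mathbf{W}(t))\Big|_{t=0} = 4 \sum_{k \in I_j} w_k^2 \bigl(\langle S_{\mathbf{W},\mathbf{w}} h, h\rangle - 2\lambda_j\bigr) \|u_k\|^2 + 4 \Big\|\sum_{k \in I_j} w_k^2 u_k\Big\|^2.
\end{equation*}

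The decisive step is that by construction $\sum_{k \in I_j} w_k^2 u_k = c \sum_{k \in I_j} v_k = 0$, so the squared norm drops out; also $\langle S_{\mathbf{W},\mathbf{w}} h, h\rangle - 2\lambda_j = \lambda_J - 2\lambda_j < -\lambda_j < 0$, while $\sum_{k \in I_j} w_k^2 \|u_k\|^2 > 0$ since the $v_k$ are not all zero. Hence the second derivative is strictly negative, impossible at a local minimizer. The main obstacle is the multi-subspace bookkeeping: a single-subspace perturbation leaves an uncontrollable coefficient of the form $\langle S_{\mathbf{W},\mathbf{w}} h, h\rangle - 2\lambda_j + 2 w_k^2$ which need not be negative, and the point of the construction is to turn on simultaneous rotations in every $W_k$ with $k \in I_j$ and align their amplitudes with the linear dependence $\sum v_k = 0$, so that the cross-term $\|\sum w_k^2 u_k\|^2$ — which is precisely what produces the uncooperative $+2w_k^2$ contribution — is killed outright.
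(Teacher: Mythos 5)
Your proposal is correct and follows essentially the same route as the paper: assuming the sum is not direct, you encode the linear dependency with the weights $w_k^2$, simultaneously rotate every $W_k$, $k\in I_j$, toward a unit vector $h\in E_J$ via Lemma~\ref{L D R(A)}, and show the first derivative of the potential vanishes while the second is strictly negative, the weighted dependency annihilating the cross term $\bigl\|\sum_{k\in I_j}w_k^2u_k\bigr\|^2$ exactly as in the paper's proof. The only discrepancy is your coefficient $\langle S_{\mathbf{W},\mathbf{w}}h,h\rangle-2\lambda_j=\lambda_J-2\lambda_j$, where the correct value (and the paper's) is $\lambda_J-\lambda_j$ --- the extra $-\lambda_j$ is what one gets by taking the matrix $D$ of Lemma~\ref{L D R(A)} as literally printed (its $\delta_{l,l'}$ should be $1-\delta_{l,l'}$, so that $D_k$ has zero diagonal and $\mathrm{tr}(D_k)=0$) --- but since both constants are strictly negative for $j<J$, the contradiction and hence your argument stand.
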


\begin{proof}
Suppose that there exists $1 \leq j \leq J-1$ such that $\{W_k\}_{k
\in I_j}$ is not a direct sum. Then there exists a linearly
dependent set $\{\{f_{k,l}\}_{l=1}^{L_{k}}\}_{k \in I_j}$ such that
$\{f_{k,l}\}_{l=1}^{L_k}$ is an orthonormal basis for $W_k$, $k \in
I_j$. Consequently, there exists a nonzero sequence
$\{\{z_{k,l}\}_{k=1}^{L_{k}}\}_{k \in I_j} \subset \mathbb{C}$ with
$|z_{k,l}|\leq \frac{1}{2}$ for all $k \in I_j$, $k=1,\ldots,L_k$
such that
\begin{equation}\label{E sum zkl fkl 0}
\displaystyle\sum_{k \in
I_j}w_{k}^{2}\sum_{l=1}^{L_k}\overline{z}_{k,l}f_{k,l}=0.
\end{equation}
The eigenspace $E_J$ of $S_{\mathbf{W},\mathbf{w}}$ is at least
1-dimensional. So, there exists $h\in E_J$ with $\| h\|=1$. Using
$\{f_{k,l}\}_{l=1}^{L_{k}}$, $\{z_{k,l}\}_{k=1}^{L_{k}}$ and $h$, we
define $A_k(t)$, $F_k$, $H_k$, $\widetilde{F}_k$ and $D_k$ as in
Lemma~\ref{L D R(A)} where we now consider $k \in I_j$ instead of $k
\in \{1, \ldots, K\}$.

Since the columns of $F_{k}$ are orthonormal,
\begin{equation}\label{E Fk* Fk = I}
F_k^{*}F_{k}=I  \text{ for $k \in I_{j}$.}
\end{equation}
By Theorem~\ref{T particion minimizadores FFP en TFF}(2), if $k \in
I_{j}$ then $\{\{f_{k,l}\}_{l=1}^{L_{k}}\}_{k \in I_j} \subset
E_{j}$. Thus, the conclusions of Lemma~\ref{L D R(A)} hold and
\begin{equation}\label{E Hk* Fk' = 0}
H_{k}^{*}F_{k'}=0 \text{ for $k, k' \in I_{j}$.}
\end{equation}
Let
$$V_k(t)=\left\{
    \begin{array}{ll}
      \text{R}(A_k(t)), & \hbox{si $k \in I_j$;} \\
      W_k, & \hbox{si $k \not\in I_j$.}
    \end{array}
  \right.$$
The sequence of continuous subspaces curves $\{V_k(t)\}_{k=1}^K$
satisfies that $\{V_k(0)\}_{k=1}^K=\mathbf{W}$ and, by Lemma~\ref{L
D R(A)}(1), $\{V_k(t)\}_{k=1}^K \in \mathcal{S}_{K}(\mathbf{L})$ for
all $t\in (-1,1)$. We are going to see that
$$\frac{d}{dt}FFP_{\mathbf{w}}(\{V_k(t)\}_{k=1}^K)|_{t=0}=0 \text{
and }
\frac{d^2}{dt^2}FFP_{\mathbf{w}}(\{V_k(t)\}_{k=1}^K)|_{t=0}<0.$$
Therefore,
\begin{equation} FFP_{\mathbf{w}}(\{V_k(t)\}_{k=1}^K)< FFP_{\mathbf{w}}(\mathbf{W})
\end{equation}
for all $t$ in a neighborhood of $0$, contradicting the hypothesis
that $\mathbf{W}$ is a local minimum of $FFP_{\mathbf{w}}$.

To simplify the exposition in the rest of the proof we set
$P_k:=\pi_{W_k}=F_{k}F_{k}^{*}$ and $\tilde{P}_k(t):=\pi_{V_k(t)}$.

We have
\begin{align*}
\frac{d}{dt}&FFP_{\mathbf{w}}(\{V_k(t)\}_{k=1}^K) = \frac{d}{dt}\sum_{k=1}^{K}\sum_{k'=1}^{K}w_{k}^{2}w_{k'}^{2}\text{tr}([\widetilde{P}_{k}(t)\widetilde{P}_{k'}(t)][\widetilde{P}_{k'}(t)\widetilde{P}_{k}(t)])= \\
=&\sum_{k=1}^{K}\sum_{k'=1}^{K}w_{k}^{2}w_{k'}^{2}\text{tr}\paren{\widetilde{P}_{k}^{'}(t)\widetilde{P}_{k'}(t)\widetilde{P}_{k'}(t)\widetilde{P}_{k}(t)}+\sum_{k=1}^{K}\sum_{k'=1}^{K}w_{k}^{2}w_{k'}^{2}\text{tr}\paren{\widetilde{P}_{k}(t)\widetilde{P}_{k'}^{'}(t)\widetilde{P}_{k'}(t)\widetilde{P}_{k}(t)} \\
&+\sum_{k=1}^{K}\sum_{k'=1}^{K}w_{k}^{2}w_{k'}^{2}\text{tr}\paren{\widetilde{P}_{k}(t)\widetilde{P}_{k'}(t)\widetilde{P}_{k'}^{'}(t)\widetilde{P}_{k}(t)}+\sum_{k=1}^{K}\sum_{k'=1}^{K}w_{k}^{2}w_{k'}^{2}\text{tr}\paren{\widetilde{P}_{k}(t)\widetilde{P}_{k'}(t)\widetilde{P}_{k'}(t)\widetilde{P}_{k}^{'}(t)}\\
=&2(\sum_{k=1}^{K}\sum_{k'=1}^{K}w_{k}^{2}w_{k'}^{2}\text{tr}\paren{\widetilde{P}_{k}^{'}(t)\widetilde{P}_{k'}(t)\widetilde{P}_{k}(t)}+\sum_{k=1}^{K}\sum_{k'=1}^{K}w_{k}^{2}w_{k'}^{2}\text{tr}\paren{\widetilde{P}_{k}^{'}(t)\widetilde{P}_{k}(t)\widetilde{P}_{k'}(t)})
\end{align*}
Then
\begin{eqnarray*}
  \frac{d}{dt}FFP_{\mathbf{w}}(\{V_k(t)\}_{k=1}^K)|_{t=0} &=& 2(\sum_{k \in
I_j}\sum_{k'=1}^{K}w_{k}^{2}w_{k'}^{2}\text{tr}\paren{\widetilde{P}_{k}^{'}(0)P_{k'}P_{k}}+\sum_{k
\in I_j}\sum_{k'=1}^{K}w_{k}^{2}w_{k'}^{2}\text{tr}\paren{\widetilde{P}_{k}^{'}(0)P_{k}P_{k'}})\\
   &=& 2(\sum_{k \in
I_j}w_{k}^{2}\text{tr}\paren{\widetilde{P}_{k}^{'}(0)S_{\mathbf{W},\mathbf{w}}P_{k}}+\sum_{k
\in
I_j}w_{k}^{2}\text{tr}\paren{\widetilde{P}_{k}^{'}(0)P_{k}S_{\mathbf{W},\mathbf{w}}})
\end{eqnarray*}
Using that $(\mathbf{W},\mathbf{w})\in \mathcal{E}$, Lemma~\ref{L D
R(A)}(2), (\ref{E Fk* Fk = I}) and (\ref{E Hk* Fk' = 0}),
\begin{align*}
  \frac{d}{dt}(FFP_{\mathbf{w}}(\{\tilde{P}_k\}_{k=1}^K))|_{t=0} &= 2(\lambda_{j}\sum_{k \in
I_j}w_{k}^{2}\text{tr}(\widetilde{P}_{k}^{'}(0)P_{k})+\lambda_{j}\sum_{k
\in I_j}w_{k}^{2}\text{tr}(\widetilde{P}_{k}^{'}(0)P_{k}))\\
   &=4\lambda_{j}\sum_{k \in
I_j}w_{k}^{2}\text{tr}\paren{(H_{k}F_{k}^{*}+F_{k}H_{k}^{*})F_{k}F_{k}^*}\\
&= 4\lambda_{j}\sum_{k \in
I_j}w_{k}^{2}\text{tr}\paren{H_{k}F_{k}^*}=4\lambda_{j}\sum_{k \in
I_j}w_{k}^{2}\text{tr}\paren{F_{k}^*H_{k}} =0.
\end{align*}
For the second derivative we have
\begin{eqnarray*}\label{E derivada segunda 1}
  \frac{d^2}{dt^2}FFP_{\mathbf{w}}(\{V_k(t)\}_{k=1}^K) &=& \sum_{k=1}^{K}\sum_{k'=1}^{K}w_{k}^{2}w_{k'}^{2}\frac{d^{2}}{dt^{2}}\text{tr}\paren{\corch{\widetilde{P}_{k}(t)\widetilde{P}_{k'}(t)}\corch{\widetilde{P}_{k'}(t)\widetilde{P}_{k}(t)}} \\
   &=& 2[a(t)+b(t)]
\end{eqnarray*}
where
\begin{eqnarray*}
  a(t) &:=& \sum_{k=1}^{K}\sum_{k'=1}^{K}w_{k}^{2}w_{k'}^{2}\text{tr}\paren{\widetilde{P}_{k}^{''}(t)\widetilde{P}_{k'}(t)\widetilde{P}_{k}(t)}+\sum_{k=1}^{K}\sum_{k'=1}^{K}w_{k}^{2}w_{k'}^{2}\text{tr}\paren{\widetilde{P}_{k}^{'}(t)\widetilde{P}_{k'}^{'}(t)\widetilde{P}_{k'}(t)\widetilde{P}_{k}(t)}+\\
  && +\sum_{k=1}^{K}\sum_{k'=1}^{K}w_{k}^{2}w_{k'}^{2}\text{tr}\paren{\widetilde{P}_{k}^{'}(t)\widetilde{P}_{k'}(t)\widetilde{P}_{k'}^{'}(t)\widetilde{P}_{k}(t)}+\sum_{k=1}^{K}\sum_{k'=1}^{K}w_{k}^{2}w_{k'}^{2}\text{tr}\paren{\widetilde{P}_{k}^{'}(t)\widetilde{P}_{k'}(t)\widetilde{P}_{k}^{'}(t)}
\end{eqnarray*}
and
\begin{eqnarray*}
  b(t) &:=& \sum_{k=1}^{K}\sum_{k'=1}^{K}w_{k}^{2}w_{k'}^{2}\text{tr}\paren{\widetilde{P}_{k}^{'}(t)\widetilde{P}_{k'}^{'}(t)\widetilde{P}_{k'}(t)\widetilde{P}_{k}(t)}+\sum_{k=1}^{K}\sum_{k'=1}^{K}w_{k}^{2}w_{k'}^{2}\text{tr}\paren{\widetilde{P}_{k}(t)\widetilde{P}_{k'}^{''}(t)\widetilde{P}_{k'}(t)}+\\
  &&
  +\sum_{k=1}^{K}\sum_{k'=1}^{K}w_{k}^{2}w_{k'}^{2}\text{tr}\paren{\widetilde{P}_{k}(t)\widetilde{P}_{k'}^{'}(t)\widetilde{P}_{k'}^{'}(t)}+\sum_{k=1}^{K}\sum_{k'=1}^{K}w_{k}^{2}w_{k'}^{2}\text{tr}\paren{\widetilde{P}_{k}(t)\widetilde{P}_{k'}^{'}(t)\widetilde{P}_{k'}(t)\widetilde{P}_{k}^{'}(t)}.
\end{eqnarray*}
Using that $(\mathbf{W},\mathbf{w})\in \mathcal{E}$, we obtain
\begin{align}\label{E a(0)}
a(0) =& \sum_{k \in I_j}\sum_{k'=1}^{K}w_{k}^{2}w_{k'}^{2}\text{tr}\paren{\widetilde{P}_{k}^{''}(0)\widetilde{P}_{k'}\widetilde{P}_{k}}+\sum_{k \in I_j}\sum_{k' \in I_j}w_{k}^{2}w_{k'}^{2}\text{tr}\paren{\widetilde{P}_{k}^{'}(0)\widetilde{P}_{k'}^{'}(0)\widetilde{P}_{k'}\widetilde{P}_{k}}+\notag\\
& +\sum_{k \in I_j}\sum_{k' \in
I_j}w_{k}^{2}w_{k'}^{2}\text{tr}\paren{\widetilde{P}_{k}^{'}(0)\widetilde{P}_{k'}\widetilde{P}_{k'}^{'}(0)\widetilde{P}_{k}}+\sum_{k
\in I_j}\sum_{k'=1}^{K}w_{k}^{2}w_{k'}^{2}\text{tr}\paren{\widetilde{P}_{k}^{'}(0)\widetilde{P}_{k'}\widetilde{P}_{k}^{'}(0)}\notag\\
=& \sum_{k \in I_j}w_{k}^{2}\text{tr}\paren{\widetilde{P}_{k}^{''}(0)S_{\mathbf{W},\mathbf{w}}\widetilde{P}_{k}}+\sum_{k \in I_j}\sum_{k' \in I_j}w_{k}^{2}w_{k'}^{2}\text{tr}\paren{\widetilde{P}_{k}^{'}(0)\widetilde{P}_{k'}^{'}(0)\widetilde{P}_{k'}\widetilde{P}_{k}}+\notag\\
& +\sum_{k \in I_j}\sum_{k' \in
I_j}w_{k}^{2}w_{k'}^{2}\text{tr}\paren{\widetilde{P}_{k}^{'}(0)\widetilde{P}_{k'}\widetilde{P}_{k'}^{'}(0)\widetilde{P}_{k}}+\sum_{k
\in I_j}w_{k}^{2}\text{tr}\paren{\widetilde{P}_{k}^{'}(0)S_{\mathbf{W},\mathbf{w}}\widetilde{P}_{k}^{'}(0)}\notag\\
=&\lambda_{j}\sum_{k \in
I_j}w_{k}^{2}\text{tr}\paren{\widetilde{P}_{k}^{''}(0)\widetilde{P}_{k}}+\sum_{k
\in I_j}\sum_{k' \in
  I_j}w_{k}^{2}w_{k'}^{2}\text{tr}\paren{\widetilde{P}_{k}^{'}(0)\widetilde{P}_{k'}^{'}(0)\widetilde{P}_{k'}\widetilde{P}_{k}}\notag\\
&+\sum_{k \in I_j}\sum_{k' \in
I_j}w_{k}^{2}w_{k'}^{2}\text{tr}\paren{\widetilde{P}_{k}^{'}(0)\widetilde{P}_{k'}\widetilde{P}_{k'}^{'}(0)\widetilde{P}_{k}}+\sum_{k
\in
I_j}w_{k}^{2}\text{tr}\paren{\widetilde{P}_{k}^{'}(0)S_{\mathbf{W},\mathbf{w}}\widetilde{P}_{k}^{'}(0)},
\end{align}
and
\begin{align}\label{E b(0)}
b(0)=&\sum_{k \in I_j}\sum_{k' \in I_j}w_{k}^{2}w_{k'}^{2}\text{tr}\paren{\widetilde{P}_{k}^{'}(0)\widetilde{P}_{k'}^{'}(0)\widetilde{P}_{k'}\widetilde{P}_{k}}+\sum_{k=1}^{K}\sum_{k' \in I_j}w_{k}^{2}w_{k'}^{2}\text{tr}\paren{\widetilde{P}_{k}\widetilde{P}_{k'}^{''}(0)\widetilde{P}_{k'}}+\notag\\
& +\sum_{k=1}^{K}\sum_{k' \in
I_j}w_{k}^{2}w_{k'}^{2}\text{tr}\paren{\widetilde{P}_{k}\widetilde{P}_{k'}^{'}(0)\widetilde{P}_{k'}^{'}(0)}+\sum_{k
\in I_j}\sum_{k' \in I_j}w_{k}^{2}w_{k'}^{2}\text{tr}\paren{\widetilde{P}_{k}\widetilde{P}_{k'}^{'}(0)\widetilde{P}_{k'}\widetilde{P}_{k}^{'}(0)}\notag\\
=& \sum_{k \in I_j}\sum_{k' \in I_j}w_{k}^{2}w_{k'}^{2}\text{tr}\paren{\widetilde{P}_{k}^{'}(0)\widetilde{P}_{k'}^{'}(0)\widetilde{P}_{k'}\widetilde{P}_{k}}+\sum_{k' \in I_j}w_{k'}^{2}\text{tr}\paren{S_{\mathbf{W},\mathbf{w}}\widetilde{P}_{k'}^{''}(0)\widetilde{P}_{k'}}+\notag\\
& +\sum_{k' \in
I_j}w_{k'}^{2}\text{tr}\paren{S_{\mathbf{W},\mathbf{w}}\widetilde{P}_{k'}^{'}(0)\widetilde{P}_{k'}^{'}(0)}+\sum_{k
\in I_j}\sum_{k' \in
  I_j}w_{k}^{2}w_{k'}^{2}\text{tr}\paren{\widetilde{P}_{k}\widetilde{P}_{k'}^{'}(0)\widetilde{P}_{k'}\widetilde{P}_{k}^{'}(0)}\notag\\
=&\lambda_{j}\sum_{k \in
I_j}w_{k}^{2}\text{tr}\paren{\widetilde{P}_{k}^{''}(0)\widetilde{P}_{k}}+\sum_{k
\in I_j}\sum_{k' \in
  I_j}w_{k}^{2}w_{k'}^{2}\text{tr}\paren{\widetilde{P}_{k}^{'}(0)\widetilde{P}_{k'}^{'}(0)\widetilde{P}_{k'}\widetilde{P}_{k}}\notag\\
&+\sum_{k \in
I_j}w_{k}^{2}\text{tr}\paren{\widetilde{P}_{k}^{'}(0)S_{\mathbf{W},\mathbf{w}}\widetilde{P}_{k}^{'}(0)}+\sum_{k
\in I_j}\sum_{k' \in
I_j}w_{k}^{2}w_{k'}^{2}\text{tr}\paren{\widetilde{P}_{k}\widetilde{P}_{k'}^{'}(0)\widetilde{P}_{k'}\widetilde{P}_{k}^{'}(0)}.
\end{align}
Using (\ref{E Fk* Fk = I}) and (\ref{E Hk* Fk' = 0}) along with
Lemma~\ref{L D R(A)}(2)-(3), that $\text{tr}\paren{D_{k}}=0$ and
(\ref{E sum zkl fkl 0}), we can rewrite each sum in (\ref{E a(0)})
and (\ref{E b(0)}) obtaining
\begin{align}\label{E a(0) b(0) 1}
\lambda_{j}\sum_{k \in
I_j}w_{k}^{2}\text{tr}\paren{\widetilde{P}_{k}^{''}(0)\widetilde{P}_{k}}=&\lambda_{j}\sum_{k \in I_j}w_{k}^{2}\text{tr}\paren{\corch{\widetilde{F}_{k}F_k^{*}+2H_{k}H_k^{*}+F_{k}D_{k}F_k^*+F_k\widetilde{F}_{k}^{*}}F_{k}F_{k}^{*}}\notag\\
=& \lambda_{j}\sum_{k \in I_j}w_{k}^{2}\text{tr}\paren{\widetilde{F}_{k}F_{k}^{*}+D_{k}+\widetilde{F}_{k}^{*}F_{k}}\notag\\
=& 2\lambda_{j}\sum_{k \in
I_j}w_{k}^{2}\text{tr}\paren{\widetilde{F}_{k}F_{k}^{*}} =
-2\lambda_{j}\sum_{k \in I_j}w_{k}^{2}\sum_{l=1}^{L_k}|z_{k,l}|^2,
\end{align}
\begin{align}\label{E a(0) b(0) 2}
\sum_{k \in I_j}\sum_{k' \in
I_j}w_{k}^{2}w_{k'}^{2}&\text{tr}\paren{\widetilde{P}_{k}^{'}(0)\widetilde{P}_{k'}^{'}(0)\widetilde{P}_{k'}\widetilde{P}_{k}}=\notag\\
&= \sum_{k \in I_j}\sum_{k' \in I_j}w_{k}^{2}w_{k'}^{2}\text{tr}\paren{(H_{k}F_{k}^{*}+F_{k}H_{k}^{*})(H_{k'}F_{k'}^{*}+F_{k'}H_{k'}^{*})F_{k'}F_{k'}^{*}F_{k}F_{k}^{*}}\notag\\
&=\sum_{k \in I_j}\sum_{k' \in I_j}w_{k}^{2}w_{k'}^{2}\text{tr}\paren{F_{k}F_{k}^{*}(H_{k}F_{k}^{*}+F_{k}H_{k}^{*})(H_{k'}F_{k'}^{*}+F_{k'}H_{k'}^{*})F_{k'}F_{k'}^{*}}\notag\\
&=\sum_{k \in I_j}\sum_{k' \in
I_j}w_{k}^{2}w_{k'}^{2}\text{tr}\paren{F_{k}H_{k}^{*}H_{k'}F_{k'}^{*}}=\|\sum_{k
\in
I_j}w_{k}^{2}\sum_{l=1}^{L_{k}}\overline{z_{k,l}}f_{k,l}\|^{2}=0,
\end{align}
\begin{align}\label{E a(0) 3}
\sum_{k \in I_j}\sum_{k' \in
I_j}w_{k}^{2}w_{k'}^{2}&\text{tr}\paren{\widetilde{P}_{k}^{'}(0)\widetilde{P}_{k'}\widetilde{P}_{k'}^{'}(0)\widetilde{P}_{k}}=\notag\\
&=\sum_{k \in I_j}\sum_{k' \in  I_j}w_{k}^{2}w_{k'}^{2}\text{tr}\paren{\widetilde{P}_{k}\widetilde{P}_{k}^{'}(0)\widetilde{P}_{k'}\widetilde{P}_{k'}^{'}(0)}\notag\\
&=\sum_{k \in I_j}\sum_{k' \in
I_j}w_{k}^{2}w_{k'}^{2}\text{tr}\paren{F_{k}F_{k}^{*}(H_{k}F_{k}^{*}+F_{k}H_{k}^{*})F_{k'}F_{k'}^{*}(H_{k'}F_{k'}^{*}+F_{k'}H_{k'}^{*})}\notag\\
&=\sum_{k \in I_j}\sum_{k' \in
I_j}w_{k}^{2}w_{k'}^{2}\text{tr}\paren{F_{k}H_{k}^{*}F_{k'}H_{k'}^{*}}=0,
\end{align}
\begin{align}\label{E a(0) 4 b(0) 3}
\sum_{k \in
  I_j}w_{k}^{2}\text{tr}\paren{\widetilde{P}_{k}^{'}(0)S_{\mathbf{W},\mathbf{w}}\widetilde{P}_{k}^{'}(0)}
  &= \sum_{k \in
  I_j}w_{k}^{2}\text{tr}\paren{(H_{k}F_{k}^{*}+F_{k}H_{k}^{*})S_{\mathbf{W},\mathbf{w}}(H_{k}F_{k}^{*}+F_{k}H_{k}^{*})}\notag\\
  &= \sum_{k \in
  I_j}w_{k}^{2}\text{tr}\paren{(H_{k}F_{k}^{*}+F_{k}H_{k}^{*})(\lambda_{J}H_{k}F_{k}^{*}+\lambda_{j}F_{k}H_{k}^{*})}\notag\\
  &= \sum_{k \in
  I_j}w_{k}^{2}(\lambda_{j}\text{tr}\paren{H_{k}H_{k}^{*}}+\lambda_{J}\text{tr}\paren{H_{k}^{*}H_{k}})\notag\\
  &= (\lambda_{j}+\lambda_{J})\sum_{k \in
  I_j}w_{k}^{2}\text{tr}\paren{H_{k}^{*}H_{k}}= (\lambda_{j}+\lambda_{J})\sum_{k \in
  I_j}w_{k}^{2}\sum_{l=1}^{L_{k}}|z_{k,l}|^{2},
\end{align}
and
\begin{align}\label{E b(0) 4}
\sum_{k \in I_j}\sum_{k' \in
I_j}w_{k}^{2}w_{k'}^{2}&\text{tr}\paren{\widetilde{P}_{k}\widetilde{P}_{k'}^{'}(0)\widetilde{P}_{k'}\widetilde{P}_{k}^{'}(0)}=\notag\\
&=\sum_{k \in I_j}\sum_{k' \in
I_j}w_{k}^{2}w_{k'}^{2}\text{tr}\paren{F_{k}F_{k}^{*}(H_{k'}F_{k'}^{*}+F_{k'}H_{k'}^{*})F_{k'}F_{k'}^{*}(H_{k}F_{k}^{*}+F_{k}H_{k}^{*})}\notag\\
&=\sum_{k \in I_j}\sum_{k' \in
I_j}w_{k}^{2}w_{k'}^{2}\text{tr}\paren{(F_{k}F_{k}^{*}F_{k'}H_{k'}^{*})(F_{k'}F_{k'}^{*}F_{k}H_{k}^{*})}=0.
\end{align}
By (\ref{E a(0) b(0) 1}), (\ref{E a(0) b(0) 2}), (\ref{E a(0) 3})
and (\ref{E a(0) 4 b(0) 3}) in (\ref{E a(0)}), and (\ref{E a(0) b(0)
1}), (\ref{E a(0) b(0) 2}), (\ref{E a(0) 4 b(0) 3}) and (\ref{E b(0)
4}) in (\ref{E b(0)}),
\begin{eqnarray*}
  a(0)=b(0) &=& -2\lambda_{j}\sum_{k \in
I_j}w_{k}^{2}\sum_{l=1}^{L_k}|z_{k,l}|^2+(\lambda_{j}+\lambda_{J})\sum_{k
\in
  I_j}w_{k}^{2}\sum_{l=1}^{L_{k}}|z_{k,l}|^{2} \\
  &=& (\lambda_{J}-\lambda_{j})\sum_{k \in
  I_j}w_{k}^{2}\sum_{l=1}^{L_{k}}|z_{k,l}|^{2}<0,
\end{eqnarray*}
and consequently
$\frac{d^2}{dt^2}FFP_{\mathbf{w}}(\{V_k(t)\}_{k=1}^K)|_{t=0}=
2[a(0)+b(0)] < 0$.
\end{proof}
\begin{thm}\label{T particion minimizadores FFP en OFB}
If $\mathbf{W}$ is a local minimizer of $FFP_{\mathbf{w}}$  such
that $(\mathbf{W},\mathbf{w})\in \mathcal{E}$, then for all $j<J$,
$\{(W_k,\lambda_{j}^{-1/2}w_{k})\}_{k \in I_j}$ is an orthonormal
fusion basis for $E_{j}$, in particular $w_{k}^2=\lambda_{j}$ for $k
\in I_j$.
\end{thm}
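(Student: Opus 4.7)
The plan is to combine Theorems~\ref{T particion minimizadores FFP en TFF}(5) and~\ref{T particion minimizadores FFP en sumas directas} and then use the tightness together with a simple dimension count to pin down the weights and upgrade the direct sum to an orthogonal one. Fix $j<J$. By Theorem~\ref{T particion minimizadores FFP en TFF}(5), $\{(W_k,w_k)\}_{k\in I_j}$ is a $\lambda_j$-tight fusion frame for $E_j$, so the span of the $W_k$, $k\in I_j$, equals $E_j$; by Theorem~\ref{T particion minimizadores FFP en sumas directas} this sum is algebraically direct. Hence
\[
E_j=\bigoplus_{k\in I_j}W_k\qquad\text{and}\qquad \dim E_j=\sum_{k\in I_j}L_k.
\]

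Restricting $S_{\mathbf{W},\mathbf{w}}$ to $E_j$ (which is legitimate because $W_k\subseteq E_j$ for $k\in I_j$ by Theorem~\ref{T particion minimizadores FFP en TFF}(2)) and taking traces in $S_{\mathbf{W},\mathbf{w}}|_{E_j}=\lambda_j I_{E_j}$ gives $\lambda_j\dim E_j=\sum_{k\in I_j}w_k^2L_k$, i.e.
\[
\sum_{k\in I_j}(\lambda_j-w_k^2)L_k=0.
\]
On the other hand, for any fixed $k_0\in I_j$ and any unit vector $f\in W_{k_0}$ the $\lambda_j$-tight relation reads $\lambda_j f=w_{k_0}^2 f+\sum_{k\in I_j\setminus\{k_0\}}w_k^2\pi_{W_k}(f)$; taking the inner product with $f$ yields $\lambda_j-w_{k_0}^2=\sum_{k\neq k_0}w_k^2\|\pi_{W_k}(f)\|^2\ge 0$. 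Thus every summand in the preceding display is nonnegative while the total sum vanishes, which forces $w_k^2=\lambda_j$ for every $k\in I_j$.

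With uniform weights, the $\lambda_j$-tightness collapses on $E_j$ to the resolution of identity $\sum_{k\in I_j}\pi_{W_k}=I_{E_j}$. Squaring and taking traces, and using the identity $\text{tr}(\pi_{W_k}\pi_{W_{k'}})=\|\pi_{W_k}\pi_{W_{k'}}\|_F^2$ noted after (\ref{E FFP}), produces
\[
\dim E_j=\sum_{k\in I_j}L_k+\sum_{\substack{k,k'\in I_j\\ k\neq k'}}\|\pi_{W_k}\pi_{W_{k'}}\|_F^2=\dim E_j+\sum_{\substack{k,k'\in I_j\\ k\neq k'}}\|\pi_{W_k}\pi_{W_{k'}}\|_F^2,
\]
so every cross term must vanish, whence $W_k\perp W_{k'}$ for distinct $k,k'\in I_j$. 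Combined with the spanning property from the first paragraph, this says precisely that $E_j$ is the orthogonal sum of the $W_k$, $k\in I_j$, and since $\lambda_j^{-1/2}w_k=1$ this is exactly the assertion that $\{(W_k,\lambda_j^{-1/2}w_k)\}_{k\in I_j}$ is an orthonormal fusion basis for $E_j$. The only genuinely nontrivial ingredient is Theorem~\ref{T particion minimizadores FFP en sumas directas} itself; once that is in hand, everything above is bookkeeping of traces.
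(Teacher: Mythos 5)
Your proof is correct, and it reaches the conclusion by a genuinely different mechanism than the paper, although both arguments rest on the same two inputs (Theorem~\ref{T particion minimizadores FFP en TFF}(5) and Theorem~\ref{T particion minimizadores FFP en sumas directas}). The paper encodes the combination ``tight fusion frame for $E_j$ plus Riesz fusion basis'' in the single operator identity $T_j^{*}T_j=\lambda_j I$ on $\bigoplus_{k\in I_j}W_k$ (tightness gives $T_jT_j^{*}=\lambda_j I_{E_j}$, and invertibility of the synthesis operator transfers this to $T_j^{*}T_j$), and then a one-line Gram computation $\langle e_{k,l},e_{k',l'}\rangle=\tfrac{\lambda_j}{w_kw_{k'}}\delta_{k,k'}\delta_{l,l'}$ delivers simultaneously $w_k^{2}=\lambda_j$ and the mutual orthogonality of the $W_k$. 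You instead split the conclusion in two: the directness is used only through the dimension count $\dim E_j=\sum_{k\in I_j}L_k$, which together with the trace of $S_{\mathbf{W},\mathbf{w}}|_{E_j}=\lambda_j I_{E_j}$ gives $\sum_{k\in I_j}(\lambda_j-w_k^{2})L_k=0$, while the pointwise estimate $\lambda_j-w_{k_0}^{2}=\sum_{k\neq k_0}w_k^{2}\|\pi_{W_k}f\|^{2}\ge 0$ forces each summand to vanish; then, with the weights equalized (and $\lambda_j>0$, which is automatic for $j<J$), squaring the resolution of identity $\sum_{k\in I_j}\pi_{W_k}=\pi_{E_j}$ and taking traces kills all cross terms $\|\pi_{W_k}\pi_{W_{k'}}\|_F^{2}$, yielding orthogonality. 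Your route is more elementary in that it never introduces the synthesis operator or justifies $T_j^{*}T_j=\lambda_j I$ (a step the paper asserts without detail), at the cost of two separate trace computations instead of one Gram identity; the paper's route is shorter once that operator fact is accepted. Both are complete.
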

\begin{proof}
By Theorem~\ref{T particion minimizadores FFP en TFF}(5) and
 Theorem~\ref{T particion minimizadores FFP en sumas
directas}, if $\mathbf{W}$ is a local minimizer of
$FFP_{\mathbf{w}}$, then for all $j<J$, $\{W_k\}_{k \in I_j}$ is a
Riesz fusion basis for $E_{j}$ and

\centerline {$T_{j}^{*}T_{j}=\lambda_{j}I_{\bigoplus_{k \in
I_j}W_k}$,}

\noindent where $T_{j}=T_{\{W_k\}_{k \in I_j},\{w_k\}_{k \in I_j}}$.
Let $\{e_{k,l}\}_{l \in L_{k}}$ be an orthonormal basis for $W_{k}$
for all $k =1, \ldots, K$. Then $\{\{(\delta_{k,m}e_{k,l})_{m \in
I_j}\}_{l\in L_k}\}_{k \in I_j}$ is an orthonormal basis for
$\bigoplus_{k \in I_j}W_k$. If $k, k' \in I_j$, $l \in L_k$ and $l'
\in L_{k'}$, then
\begin{align*}
\langle e_{k,l}, e_{k',l'} \rangle &= \langle
\frac{1}{w_{k}}T_{j}(\delta_{k,m}e_{k,l})_{m \in I_j},
\frac{1}{w_{k'}}T_{j}(\delta_{k',m}e_{k',l'})_{m \in I_j} \rangle\\
  &= \frac{1}{w_{k}w_{k'}}\langle (\delta_{k,m}e_{k,l})_{m \in I_j},
T_{j}^{*}T_{j}(\delta_{k',m}e_{k',l'})_{m \in I_j}
\rangle\\
  &= \frac{\lambda_{j}}{w_{k}w_{k'}}\langle (\delta_{k,m}e_{k,l})_{m \in I_j},
(\delta_{k',m}e_{k',l'})_{m \in I_j} \rangle =
\frac{\lambda_{j}}{w_{k}w_{k'}}\delta_{k,k'}\delta_{l,l'}
\end{align*}
This shows that the subspaces of the family $\{W_k\}_{k \in I_j}$
are orthogonal and
$1=\|e_{k,l}\|^{2}=\frac{\lambda_{j}}{w_{k}^{2}}$. Since also
$\{W_k\}_{k \in I_j}$ is a Riesz fusion basis for $E_{j}$, the
family $\{(W_k,\lambda_{j}^{-1/2}w_{k})\}_{k \in I_j}$ is an
orthonormal fusion basis for $E_j$.
\end{proof}

\section{A concept of irregularity and the structure of the
minimizers}

The next lemma leads to a notion of irregularity of a positive
decreasing sequence, which is presented in
\cite{Massey-Ruiz-Stojanoff (2010)} and extends the one in
\cite{Casazza-Fickus-Kovacevic-Leon-Tremain (2006)}.

\begin{lem}\label{L irregularidad}
Let  $d\leq\sum_{k=1}^{K}L_{k}$. For any real positive decreasing
sequence $\{c_k\}_{k=1}^K,$ there exists a unique index $N_0$ with
$1\le N_0\le K$ such that the inequality
\begin{equation*}
(d-\sum_{k=1}^{j}L_{k})c_j >  \sum_{k=j+1}^K L_{k} c_k
\end{equation*}
holds for $1\le j <  N_0,$ while
\begin{equation}\label{irr}
(d-\sum_{k=1}^{j}L_{k})c_j \leq  \sum_{k=j+1}^K L_{k} c_k
\end{equation}
holds for $N_0\le j \le K.$
\end{lem}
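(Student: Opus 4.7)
My plan is to reformulate the two inequalities as a sign condition on a single function of $j$ and then reduce the claim to a monotonicity statement. Define
\[
g(j) := \Bigl(d-\sum_{k=1}^{j}L_{k}\Bigr)c_j - \sum_{k=j+1}^{K}L_{k}c_k,
\]
so that the strict inequality at $j$ reads $g(j)>0$ and the weak reverse reads $g(j)\le 0$. The lemma will follow once I prove: (a) there exists some $j$ with $g(j)\le 0$, and (b) if $g(j)\le 0$ for some $j<K$, then $g(j+1)\le 0$. Granting these, I would define $N_0$ to be the smallest index in $\{1,\dots,K\}$ satisfying $g(N_0)\le 0$; minimality gives $g(j)>0$ for $j<N_0$, and (b) propagates $g(j)\le 0$ to every $j\ge N_0$. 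Uniqueness of $N_0$ is automatic from this characterization.

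For (a), I would just evaluate at $j=K$: the empty sum is $0$, and the hypothesis $d\le\sum_{k=1}^{K}L_k$ together with $c_K>0$ gives $g(K)=(d-\sum_{k=1}^{K}L_k)c_K\le 0$. So the set $\{j: g(j)\le 0\}$ is nonempty and $N_0$ is well defined with $N_0\le K$. (If it happens that $g(1)\le 0$, then $N_0=1$ and the condition on $j<N_0$ is vacuous, which matches the statement $1\le N_0$.)

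For (b), which is the main step, let $A_j:=d-\sum_{k=1}^{j}L_k$ and $B_j:=\sum_{k=j+1}^{K}L_k c_k$, so that $A_{j+1}=A_j-L_{j+1}$ and $B_{j+1}=B_j-L_{j+1}c_{j+1}$. The inequality $g(j+1)\le 0$ becomes $A_j c_{j+1}\le B_j$ after canceling the common term $L_{j+1}c_{j+1}$. I would now split into two cases. If $A_j\ge 0$, then $c_{j+1}\le c_j$ gives $A_j c_{j+1}\le A_j c_j\le B_j$, where the last inequality is $g(j)\le 0$. If $A_j<0$, then $A_j c_{j+1}<0\le B_j$ directly, since $B_j$ is a sum of non-negative terms. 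Both cases yield $g(j+1)\le 0$, completing the monotonicity step.

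The only subtlety I foresee is the possible sign change of $A_j$, which is exactly why the case split is needed; the decreasing hypothesis on $\{c_k\}$ is used only in the case $A_j\ge 0$, while the case $A_j<0$ is handled solely by positivity of the terms in $B_j$. Combining (a) and (b) with the minimality definition of $N_0$ completes the proof.
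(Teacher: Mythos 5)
Your proposal is correct and follows essentially the same argument as the paper: show the weak inequality holds at $j=K$ (from $d\le\sum_k L_k$), prove it propagates from $j$ to $j+1$ by splitting on the sign of $d-\sum_{k=1}^{j}L_k$ (using monotonicity of $\{c_k\}$ in one case and positivity in the other), and take $N_0$ to be the minimal such index. The only cosmetic difference is that you cancel the term $L_{j+1}c_{j+1}$ before the case split, which the paper instead carries through its chain of inequalities.
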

\begin{proof}
We will assume that any summation over an empty set is zero. Let
$\mathcal{I}$ the set of indices such that (\ref{irr}) holds, which
is non-empty since $K \in \mathcal{I}.$ Let $j\in \mathcal{I}.$ If
$\sum_{k=1}^{j}L_{k}>d$ then it is immediate that $j+1\in
\mathcal{I}.$ If $\sum_{k=1}^{j}L_{k}\leq d$ then
\begin{align*}
[d-\sum_{k=1}^{j+1}L_{k}]c_{j+1}&=-L_{j+1}c_{j+1}+(d-\sum_{k=1}^{j}L_{k})c_{j+1}\leq
-L_{j+1}c_{j+1}+(d-\sum_{k=1}^{j}L_{k})c_{j}\\&\leq
-L_{j+1}c_{j+1}+\sum_{k=j+1}^K L_{k} c_k=\sum_{k=j+2}^K L_{k} c_k
\end{align*}
Therefore $k+1\in \mathcal{I}$. Set $N_0$ as the minimum index of
$\mathcal{I}.$
\end{proof}

We say that the \textit{$(\mathbf{L},d)$-irregularity} of a real
nonnegative decreasing sequence $\{c_k\}_{k=1}^K$ is $N_0-1,$ where
$N_0$ is the unique index from the previous Lemma.

\subsection{The fundamental inequality}

\begin{defn}\label{D desigualdad fundamental}
We say that $(\mathbf{w} ,\mathbf{L})$ satisfies the fundamental
inequality if
\begin{equation}\label{E desigualdad fundamental}
\max_{k=1,\ldots,K}w_{k}^{2} \leq \frac{1}{d}
\sum_{k=1}^{K}w_k^{2}L_{k}.
\end{equation}
\end{defn}

\begin{rem}
If $(\mathbf{w}, \mathbf{L})$ satisfies the fundamental inequality,
then
$$\max_{k=1,\ldots,K}w_{k}^{2} \leq \frac{1}{d} \sum_{k=1}^{K}w_k^{2}L_{k} \leq \frac{1}{d}
\max_{k=1,\ldots,K}w_{k}^{2}\sum_{k=1}^{K}L_{k},$$ thus
$\sum_{k=1}^{K}L_{k} \geq d$. Note that this restriction is
obviously required for fusion frames to exist for that $\mathbf{L}$.
\end{rem}

\begin{rem}\label{R desigualdad fundamental w iguales}
Let $w \in \mathbb{R}_{>0}$. Then $(w, \mathbf{L})$ satisfies the
fundamental inequality if and only if $d \leq \sum_{k=1}^{K}L_{k}$.
In particular, if $L \in \mathbb{N}$, then $(w, L)$ satisfies the
fundamental inequality if and only if $d \leq KL$.
\end{rem}

\begin{prop}\label{P fundineq}
If $(\mathbf{W},\mathbf{w})$ is an $\alpha-$tight fusion frame for
$\mathbb{F}^{d}$ with $\mathbf{W} \in \mathcal{S}_{K}(\mathbf{L})$
then $(\mathbf{w}, \mathbf{L})$ satisfies the fundamental
inequality.
\end{prop}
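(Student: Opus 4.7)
The plan is to extract the tight constant $\alpha$ by taking a trace, and then bound each individual weight by $\alpha$ by testing the tightness relation $S_{\mathbf{W},\mathbf{w}} = \alpha I$ against a unit vector lying in the subspace $W_{k_0}$ whose weight one wants to bound. The statement of the fundamental inequality then falls out immediately, since these two quantities will coincide.

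First I would compute $\alpha$. Since $S_{\mathbf{W},\mathbf{w}} = \alpha I$ on $\mathbb{F}^{d}$, taking the trace gives $\alpha d = \text{tr}(S_{\mathbf{W},\mathbf{w}}) = \sum_{k=1}^{K} w_{k}^{2}\, \text{tr}(\pi_{W_{k}}) = \sum_{k=1}^{K} w_{k}^{2} L_{k}$, so that $\alpha = \frac{1}{d}\sum_{k=1}^{K} w_{k}^{2} L_{k}$. This identifies the right-hand side of the fundamental inequality with $\alpha$, and reduces the problem to showing $w_{k_{0}}^{2} \leq \alpha$ for each $k_{0}$.

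Next, fix $k_{0} \in \{1,\ldots,K\}$ and choose any unit vector $f \in W_{k_{0}}$ (which exists because $L_{k_{0}} \geq 1$). Then $\pi_{W_{k_{0}}} f = f$, and applying the tightness identity to $f$ and then pairing with $f$ yields
\begin{equation*}
\alpha = \langle \alpha f, f \rangle = \langle S_{\mathbf{W},\mathbf{w}} f, f \rangle = \sum_{k=1}^{K} w_{k}^{2} \langle \pi_{W_{k}} f, f \rangle = \sum_{k=1}^{K} w_{k}^{2} \|\pi_{W_{k}} f\|^{2}.
\end{equation*}
Every summand is nonnegative, and the summand for $k = k_{0}$ equals $w_{k_{0}}^{2} \|f\|^{2} = w_{k_{0}}^{2}$, so $\alpha \geq w_{k_{0}}^{2}$. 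Since $k_{0}$ was arbitrary, $\max_{k} w_{k}^{2} \leq \alpha = \frac{1}{d}\sum_{k=1}^{K} w_{k}^{2} L_{k}$, which is exactly the fundamental inequality.

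There is no real obstacle here; the argument is two lines once one recognizes that tightness gives both the explicit value of $\alpha$ via the trace and a termwise nonnegative decomposition via $\langle Sf, f \rangle$. The only thing to be a little careful about is that $L_{k_{0}} \geq 1$ so that a unit vector in $W_{k_{0}}$ exists — which is implicit in the definition of a fusion frame (weights are positive and subspaces nontrivial).
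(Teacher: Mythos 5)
Your proof is correct and follows essentially the same strategy as the paper: compute $\alpha d=\operatorname{tr}(S_{\mathbf{W},\mathbf{w}})=\sum_{k=1}^{K}w_k^2L_k$ and then use positivity of the remaining terms to get $\max_k w_k^2\le\alpha$. The only (harmless) difference is that you obtain $w_{k_0}^2\le\alpha$ by evaluating the quadratic form $\langle S_{\mathbf{W},\mathbf{w}}f,f\rangle$ at a single unit vector $f\in W_{k_0}$, whereas the paper traces the identity against $\pi_{W_{k_0}}$, using $\operatorname{tr}(\pi_{W_{k'}}\pi_{W_{k_0}})\ge 0$, to get $w_{k_0}^2L_{k_0}\le\alpha L_{k_0}$.
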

\begin{proof}
For $k\in\{1,\ldots,K\},$
\begin{align*}
w_k^2L_k&=w_k^2\text{tr}(\pi_{W_k})\leq \sum_{k'=1}^K w_{k'}^2
\text{tr}(\pi_{W_{k'}}\pi_{W_{k}})= \text{tr}(\sum_{k'=1}^K
w_{k'}^2\pi_{W_{k'}}\pi_{W_{k}})\\ &=
\text{tr}(S_{\mathbf{W},\mathbf{w}}\pi_{W_{k}}) = \text{tr}(\alpha
I_{\mathcal{H}}\pi_{W_{k}}) = \alpha\text{tr}(\pi_{W_{k}})= \alpha
L_{k}
\end{align*}
Hence, $w_k^2\leq \alpha$ for all $k=1,...,K.$ On the other hand,
$$\sum_{k=1}^K w_k^2 L_k =\sum_{k=1}^K w_{k}^{2}\text{tr}(\pi_{W_k})=
\text{tr}(S_{\mathbf{W},\mathbf{w}}) = \text{tr}(\alpha
I_{\mathcal{H}}) = \alpha d.$$ So the result follows.
\end{proof}

The following lemma establishes the relation between the concept of
irregularity and the fundamental inequality.

\begin{lem}\label{L relacion des fund e irregularidad}
Suppose that $\mathbf{w}$ is arranged in decreasing order. Then
$(\mathbf{w},\mathbf{L})$ satisfies the fundamental inequality if
and only if the $(\mathbf{L},d)-$irregularity of $\mathbf{w}$ is
zero.
\end{lem}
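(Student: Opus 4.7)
The plan is to recognize the lemma as essentially a rearrangement of Lemma~\ref{L irregularidad} applied to the decreasing positive sequence $\{w_k^2\}_{k=1}^K$. This identification is the natural one here because the weights enter the fusion frame operator and the fundamental inequality quadratically, so this is the sequence whose $(\mathbf{L},d)$-irregularity is intended in the statement.

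First I would apply Lemma~\ref{L irregularidad} with $c_k=w_k^2$ to obtain the associated threshold index $N_0$. By definition, the $(\mathbf{L},d)$-irregularity of $\mathbf{w}$ equals zero if and only if $N_0=1$. Since there is no $j$ with $1\le j<1$, the strict-inequality clause of Lemma~\ref{L irregularidad} is vacuous, and the condition $N_0=1$ reduces to the single requirement that the non-strict inequality at $j=1$ holds, namely
\[
(d-L_1)\,w_1^2\ \le\ \sum_{k=2}^{K}L_k\,w_k^2.
\]

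Next I would rearrange this inequality by adding $L_1 w_1^2$ to both sides and dividing by $d$, obtaining the equivalent form $w_1^2\le \tfrac{1}{d}\sum_{k=1}^{K}L_k\,w_k^2$. Because $\mathbf{w}$ is arranged in decreasing order, we have $w_1^2=\max_{k} w_k^2$, so this is exactly the fundamental inequality from Definition~\ref{D desigualdad fundamental}. Every manipulation is reversible, so the converse implication comes along for free.

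I do not anticipate a real obstacle: the argument is a chain of elementary algebraic manipulations bracketing one invocation of Lemma~\ref{L irregularidad}. The only genuine subtleties are being careful about the convention under which the irregularity of $\mathbf{w}$ is computed (the squared sequence, which alone makes the equivalence work) and making sure the decreasing-order hypothesis is used precisely at the step that identifies $\max_k w_k^2$ with $w_1^2$.
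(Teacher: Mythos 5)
Your proposal is correct and follows essentially the same route as the paper: rewrite the fundamental inequality as $(d-L_1)w_1^2\le\sum_{k=2}^K L_k w_k^2$ and identify this, via Lemma~\ref{L irregularidad} applied to the decreasing sequence $\{w_k^2\}_{k=1}^K$, with the condition $N_0=1$, i.e.\ irregularity zero. Your explicit remarks that the irregularity is taken of the squared weights (as the paper's own proof also indicates by writing $\mathbf{w}^2$) and that the decreasing order is what makes $w_1^2=\max_k w_k^2$ are exactly the right points of care.
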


\begin{proof}
The pair $(\mathbf{w}, \mathbf{L})$ satisfies the fundamental
inequality if and only if $w_{1}^{2}\leq \frac{1}{d}\sum_{k=1}^K
L_{k}w_{k}^{2}$, i.e., $(d-L_{1})w_{1}^{2}\leq
\sum_{k=2}^KL_{k}w_{k}^{2}$, and this is equivalent to say that the
$(\mathbf{L},d)-$irregularity of $\mathbf{w}^2$ is zero.
\end{proof}

\subsection{The concept of irregularity and the structure of the
minimizers}

To prove Theorem \ref{T estructura minimos FFP con N0} we need the
following proposition.
\begin{prop}\label{T minimo no ajustado FFP IJ={1,...,K}}
Assume that $\mathbf{w}$ is arranged in decreasing order and that
$\mathbf{W}$ is a local minimizer of $FFP_{\mathbf{w}},$ such that
$(\mathbf{W},\mathbf{w})\in \mathcal{E}$. If $\mathbf{w}$ has
$(\mathbf{L},d)$-irregularity $N_{0}-1$, then $I_{J} = \{N_{0},
\ldots, K\}$.
\end{prop}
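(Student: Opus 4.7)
The strategy is to argue that the blocks $I_1,\ldots,I_{J-1}$ are forced to be contiguous intervals of $\{1,\ldots,K\}$ placed in decreasing-$\lambda$ order from the left, so that $I_J$ is necessarily a tail $\{M,M+1,\ldots,K\}$, and then to identify $M$ with $N_0$ by rewriting the fundamental inequality on $E_J$ as the irregularity inequality at $j=M$.

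First I would invoke Theorem~\ref{T particion minimizadores FFP en OFB}: for every $j<J$, all indices $k\in I_j$ satisfy $w_k^2=\lambda_j$, and $\dim(E_j)=\sum_{k\in I_j}L_k$ because the $W_k$ form an orthonormal fusion basis of $E_j$. Since the eigenvalues $\lambda_1>\cdots>\lambda_{J-1}$ are pairwise distinct and $\mathbf{w}$ is decreasing, each $I_j$ must be a contiguous block of indices, and $I_j$ lies entirely to the left of $I_{j'}$ whenever $j<j'<J$. Next, Theorem~\ref{T particion minimizadores FFP en TFF}(5) gives that $(\{W_k\}_{k\in I_J},\{w_k\}_{k\in I_J})$ is a $\lambda_J$-tight fusion frame for $E_J$, and Proposition~\ref{P fundineq} applied inside $E_J$ yields $w_k^2\leq\lambda_J<\lambda_{J-1}$ for every $k\in I_J$. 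Thus $I_J$ sits strictly to the right of $I_{J-1}$, and we may write $I_J=\{M,\ldots,K\}$ with $M-1=\sum_{j<J}|I_j|$ and $\dim(E_J)=d-\sum_{k=1}^{M-1}L_k$.

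To conclude, I would combine two short computations. The $\lambda_J$-tightness on $E_J$ gives $\sum_{k=M}^K w_k^2 L_k=\lambda_J\dim(E_J)$, and the fundamental inequality applied to the largest weight $w_M^2\leq\lambda_J$ rearranges to
\[
\Bigl(d-\sum_{k=1}^{M}L_k\Bigr)w_M^2\leq\sum_{k=M+1}^K w_k^2 L_k,
\]
which is precisely the irregularity inequality at $j=M$; hence $N_0\leq M$. Conversely, if $M\geq 2$ then $M-1\in I_{J-1}$, so $w_{M-1}^2=\lambda_{J-1}$, and both sides of the irregularity inequality at $j=M-1$ collapse to $\lambda_{J-1}\dim(E_J)$ and $\lambda_J\dim(E_J)$ respectively; since $\lambda_{J-1}>\lambda_J$ and $\dim(E_J)>0$, the inequality fails at $M-1$, so $N_0\geq M$. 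The edge case $M=1$ is trivial: the first inequality alone forces $N_0=1$.

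The main obstacle I expect is the combinatorial placement step, namely turning the algebraic facts \emph{$w_k^2=\lambda_j$ on $I_j$} and \emph{$w_k^2\leq\lambda_J$ on $I_J$} into the geometric statement that $I_1,\ldots,I_{J-1}$ fill an initial segment of $\{1,\ldots,K\}$ and $I_J$ is the complementary tail; this is where the decreasing arrangement of $\mathbf{w}$ and the sub-fusion-frame argument (hence implicitly the fusion-frame property of $\mathbf{W}$ ensuring $\dim(E_J)>0$ and $I_J\neq\emptyset$) must be handled carefully. Once the geometric picture is settled, the identification $M=N_0$ reduces to a one-line manipulation of the irregularity inequality.
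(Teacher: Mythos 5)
Your main line of argument is sound and it reorganizes the proof differently from the paper: you first use the weight rigidity $w_k^2=\lambda_j$ on $I_j$ (Theorem~\ref{T particion minimizadores FFP en OFB}) together with $w_k^2\le\lambda_J$ on $I_J$ (Theorem~\ref{T particion minimizadores FFP en TFF}(4)--(5) plus Proposition~\ref{P fundineq} applied in $E_J$) and the decreasing arrangement of $\mathbf{w}$ to conclude that $I_J$ is a tail $\{M,\ldots,K\}$ with $\dim(E_J)=d-\sum_{k=1}^{M-1}L_k$, and then you identify $M=N_0$ by checking (\ref{irr}) at $j=M$ (it holds, so $N_0\le M$) and at $j=M-1$ (it fails because the left side is $\dim(E_J)\,w_{M-1}^2>\lambda_J\dim(E_J)$, so $N_0\ge M$); both computations are correct, as is the $M=1$ edge case. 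The paper instead proves two inclusions: an inequality chain showing directly that every $k\in I_J^c$ satisfies the strict reverse of (\ref{irr}), hence $I_J^c\subseteq\{1,\ldots,N_0-1\}$, followed by a contradiction argument with $k_0=\min$ and $k_1=\max$ of $\{1,\ldots,N_0-1\}\cap I_J$. Your version is arguably more transparent once the tail structure is in place, and it uses exactly the same ingredients.

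There is, however, one genuine gap: your argument presupposes $I_J\neq\emptyset$ in order to write $I_J=\{M,\ldots,K\}$, and the justification you sketch (``the fusion-frame property of $\mathbf{W}$ ensuring $\dim(E_J)>0$ and $I_J\neq\emptyset$'') is circular. The hypotheses only place $(\mathbf{W},\mathbf{w})$ in $\mathcal{E}$, i.e.\ it is a Bessel fusion sequence; the spanning property is precisely what Corollary~\ref{C minimo local entonces genera} later deduces \emph{from} this Proposition, because $I_J\neq\emptyset$ is part of its conclusion (note that $\dim(E_J)>0$ is automatic, but $\lambda_J=0$ with no $W_k$ inside the kernel would give $I_J=\emptyset$ by Theorem~\ref{T particion minimizadores FFP en TFF}(6)). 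The paper avoids this because its first inequality chain applies to an arbitrary $k\in I_J^c$ and forces $I_J^c\subseteq\{1,\ldots,N_0-1\}$, which already yields $K\in I_J$. The gap is fixable within your scheme: if $I_J=\emptyset$, then $\bigcup_{j<J}I_j=\{1,\ldots,K\}$ and Theorem~\ref{T particion minimizadores FFP en OFB} gives $\sum_{k=1}^{K}L_k=\sum_{j<J}\dim(E_j)=d-\dim(E_J)<d$, contradicting the condition $d\le\sum_{k=1}^{K}L_k$ which is implicit in the existence of the $(\mathbf{L},d)$-irregularity (Lemma~\ref{L irregularidad}); equivalently, your ``converse'' computation run at $j=K$ would contradict the fact that (\ref{irr}) always holds at $j=K$. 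Add such a step before introducing $M$ and the proof is complete.
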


\begin{proof}
 Reasoning as in \cite{Casazza-Fickus-Kovacevic-Leon-Tremain
(2006)}, we will first show that $\{N_{0},\ldots,K\}\subseteq I_J,$
which is equivalent to prove that $I_J^{c}\subseteq
\{1,\ldots,N_{0}-1\}.$  Let $k\in I_J^{c}.$ Using Theorem~\ref{T
particion minimizadores FFP en TFF}(1)(3)(4) and Theorem~\ref{T
particion minimizadores FFP en OFB},
\begin{equation*}
  \begin{aligned}
 \sum_{i=k+1}^{K}L_{i}w_i^2&= w_k^2\sum_{i=1, i \in I_{J}}^{k}L_{i} + \sum_{i=k+1}^{K}L_{i}w_i^2-w_k^2\sum_{i=1, i \in I_{J}}^{k}L_{i}\\
  &\leq \sum_{i=1, i \in I_{J}}^{k}L_{i}w_i^2 + \sum_{i=k+1}^{K}L_{i}w_i^2-w_k^2\sum_{i=1, i \in I_{J}}^{k}L_{i}\\
  &\leq \sum_{i \in I_J}L_iw_i^2 + \sum_{i=k+1, i\in I_J^{c}}^{K} L_{i}w_i^2 -w_k^2\sum_{i=1, i \in I_{J}}^{k}L_{i}\\
  &=\lambda_J \text{dim}(E_{J}) + \sum_{i=k+1, i\in I_J^{c}}^{K} L_{i}w_i^2-w_k^2\sum_{i=1, i \in I_{J}}^{k}L_{i}\\
  &=\lambda_J(d-\sum_{j=1}^{J-1}\sum_{k \in I_{j}}L_{k})+\sum_{i=k+1, i\in I_J^{c}}^{K} L_{i}w_i^2-w_k^2\sum_{i=1, i \in I_{J}}^{k}L_{i}\\
  &<(d-\sum_{k \in I_{J}^{c}}L_{k}+\sum_{i=k+1, i\in I_J^{c}}^{K} L_{i}-\sum_{i=1, i \in I_{J}}^{k}L_{i})w_k^2\\
  &=(d-\sum_{i=1}^{k}L_{i}) w_k^2.\\
\end{aligned}
\end{equation*}
Hence $k\in \{1,\ldots,N_{0}-1\}.$

To see that $I_J\subseteq \{N_{0},\ldots,K\}$ it is only left to prove that $\{1,\ldots,N_{0}-1\}\cap I_J=\emptyset.$\\
Assume to the contrary that $\{1,\ldots,N_{0}-1\}\cap
I_J\neq\emptyset.$ Let $k_0=\min \{1,\ldots,N_{0}-1\}\cap I_J$ and
$k_1=\max \{1,\ldots,N_{0}-1\}\cap I_J$. We have $k_0=\min I_J$, so
$\{k_{0}\}=\{1,\ldots,k_{0}\}\cap I_{J}$. Thus, by Theorem~\ref{T
particion minimizadores FFP en TFF}(5) and Proposition~\ref{P
fundineq},
\begin{equation*}
  \begin{aligned}
w_{k_{0}}^2 = \max_{k \in I_{J}}w_{k}^2 &\le
\frac{1}{\text{dim}(E_{J})}
\sum_{k\in I_J}L_{k} w_k^2\\
(\text{dim}(E_{J})-L_{k_{0}})w_{k_{0}}^2&\le \sum_{k\in I_J,k > k_{0}}L_{k}w_k^2\\
(\text{dim}(E_{J})-\sum_{k\in \{1,\ldots,k_{0}\}\cap I_{J}}L_{k})w_{k_{0}}^2&\le \sum_{k\in I_J,k > k_{0}}L_{k}w_k^2.\\
\end{aligned}
\end{equation*}
Hence, by Theorem~\ref{T particion minimizadores FFP en TFF}(1)(3)
and Theorem~\ref{T particion minimizadores FFP en OFB},
\begin{align*}
(d-\sum_{k\in I_{J}^{c}}L_{k}-\sum_{k\in \{1,\ldots,k_{1}\}\cap
I_{J}}L_{k})w_{k_{1}}^2&=(d-\sum_{j=1}^{J-1}\sum_{k \in
I_{j}}L_{k}-\sum_{k\in \{1,\ldots,k_{1}\}\cap
I_{J}}L_{k})w_{k_{1}}^2\\ &= (\text{dim}(E_{J})-\sum_{k\in
\{1,\ldots,k_{1}\}\cap I_{J}}L_{k})w_{k_{1}}^2\\ & \leq \sum_{k\in
I_J,k > k_{1}}L_{k}w_k^2.
\end{align*}
Since $\{N_{0},\ldots,K\} \subseteq I_{J}$ and $k_1$ is the maximum
of $\{1,\ldots,N_{0}-1\}\cap I_J$, we have
$\{1,\ldots,N_{0}-1\}=I_{J}^{c} \cup (\{1, \ldots, k_{1}\} \cap
  I_J)$, and thus, $d-\sum_{k=1}^{N_{0}-1}L_{k} = d-\sum_{k\in
I_{J}^{c}}L_{k}-\sum_{k\in \{1,\ldots,k_{1}\}\cap I_{J}}L_{k}.$
Also, $\{k_1+1,\ldots,K\}\cap
  I_J=\{N_{0},\ldots,K\}$.

Therefore, from the previous inequality,
$(d-\sum_{k=1}^{N_{0}-1}L_{k})w_{N_{0}-1}^2\le
\sum_{k=N_{0}}^{K}L_{k}w_k^2.$ But the definition of the
irregularity $N_{0}$  gives that $(d-\sum_{i=1}^{k}L_{i})w_k^2 >
\sum_{i=k+1}^KL_{i}w_i^2$ for all $k< N_{0},$ which is a
contradiction for $k=N_{0}-1.$
\end{proof}

If $\mathbf{W}$ is a local minimizer of $FFP_{\mathbf{w}}$such that
$(\mathbf{W},\mathbf{w})\in \mathcal{E},$ then Proposition~\ref{T
minimo no ajustado FFP IJ={1,...,K}} shows that $I_{J} \neq
\emptyset$. So we can state the following:

\begin{cor}\label{C minimo local entonces genera}
If $\mathbf{W}$ is a local minimizer of $FFP_{\mathbf{w}}$such that
$(\mathbf{W},\mathbf{w})\in \mathcal{E},$ then
$(\mathbf{W},\mathbf{w})$ is a fusion frame.
\end{cor}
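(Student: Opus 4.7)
The plan is to piece together, in one short argument, the index description supplied by Proposition~\ref{T minimo no ajustado FFP IJ={1,...,K}} with the eigenvalue dictionary in Theorem~\ref{T particion minimizadores FFP en TFF}(6). Whether $(\mathbf{W},\mathbf{w})$ is a fusion frame is invariant under relabelling the indices, so the first move is to reorder the data so that $\mathbf{w}$ is arranged in decreasing order; this allows Proposition~\ref{T minimo no ajustado FFP IJ={1,...,K}} to be applied, and produces the $(\mathbf{L},d)$-irregularity $N_{0}-1$ of $\mathbf{w}$ together with the identification $I_{J}=\{N_{0},\dots,K\}$. Since $1\le N_{0}\le K$, this set is nonempty.

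The second step is to feed this into Theorem~\ref{T particion minimizadores FFP en TFF}(6), which asserts $I_{J}=\emptyset$ if and only if $\lambda_{J}=0$. Combining it with $I_{J}\neq\emptyset$ yields $\lambda_{J}\neq 0$. Because $S_{\mathbf{W},\mathbf{w}}$ is positive semi-definite and the distinct eigenvalues $\lambda_{1}>\cdots>\lambda_{J}$ are listed in decreasing order, this forces $\lambda_{J}>0$, and hence every eigenvalue of $S_{\mathbf{W},\mathbf{w}}$ is strictly positive. So $S_{\mathbf{W},\mathbf{w}}$ is invertible on $\mathbb{F}^{d}$.

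The last step is to translate invertibility of the fusion frame operator into the spanning condition of Definition~\ref{D FF}(1). Since $R(S_{\mathbf{W},\mathbf{w}})=\text{span}\bigcup_{k=1}^{K}W_{k}$ in finite dimensions, the invertibility just established forces $\text{span}\bigcup_{k=1}^{K}W_{k}=\mathbb{F}^{d}$, which is exactly the definition of $(\mathbf{W},\mathbf{w})$ being a fusion frame. I do not anticipate a real obstacle here: every nontrivial ingredient has already been done in Section~3 and in Proposition~\ref{T minimo no ajustado FFP IJ={1,...,K}}, and the corollary is essentially their immediate packaging.
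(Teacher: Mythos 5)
Your argument is correct and follows the paper's route: the paper also derives the corollary by applying Proposition~\ref{T minimo no ajustado FFP IJ={1,...,K}} (after arranging $\mathbf{w}$ in decreasing order) to get $I_J\neq\emptyset$, and then the spanning property via Theorem~\ref{T particion minimizadores FFP en TFF}. You merely spell out the steps the paper leaves implicit ($I_J\neq\emptyset\Rightarrow\lambda_J>0\Rightarrow S_{\mathbf{W},\mathbf{w}}$ invertible $\Rightarrow$ the $W_k$ span $\mathbb{F}^d$), which is fine.
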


\begin{thm}\label{T estructura minimos FFP con N0}
Let $\mathbf{L}\in \N^K$ and $\mathbf{w}$ be a vector of decreasing
weights with $(\mathbf{L},d)$-irregularity equal to $N_{0}$. Then
any local minimizer $\mathbf{W}$ of the frame potential
$FFP_{\mathbf{w}}$ such that $(\mathbf{W},\mathbf{w})\in
\mathcal{E},$ can be decomposed as
$$(\mathbf{W},\mathbf{w})=\{(W_k,w_{k})\}_{k=1}^{N_0-1}\cup
\{(W_k,w_{k})\}_{k=N_0}^{K},$$ where
$\{(W_k,\lambda_{j}^{-1/2}w_{k})\}_{k=1}^{N_0-1}$ is an orthonormal
fusion frame sequence for whose orthogonal complement the sequence
$\{(W_k,w_{k})\}_{k=N_0}^{K}$ is a tight fusion frame.
\end{thm}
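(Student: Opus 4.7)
The plan is to assemble the decomposition directly from the results already proved: Theorem~\ref{T particion minimizadores FFP en TFF}, Theorem~\ref{T particion minimizadores FFP en OFB}, Proposition~\ref{T minimo no ajustado FFP IJ={1,...,K}} and Corollary~\ref{C minimo local entonces genera}. Essentially no new analytic input is needed; the work is combinatorial/structural.

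First I would apply Proposition~\ref{T minimo no ajustado FFP IJ={1,...,K}} to locate the index set associated with the smallest eigenvalue: under the hypothesis on the irregularity of $\mathbf{w}$ one has $I_{J}=\{N_{0},\ldots,K\}$. Together with parts (1) and (3) of Theorem~\ref{T particion minimizadores FFP en TFF}, this shows that $\{1,\ldots,N_{0}-1\}=\bigcup_{j<J}I_{j}$ is a disjoint union, so each $k\in\{1,\ldots,N_{0}-1\}$ belongs to a unique $I_{j}$ with $j<J$; this is the $j$ implicit in the weight $\lambda_{j}^{-1/2}w_{k}$ appearing in the statement.

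Next I would describe the first piece of the decomposition. For each $j<J$, Theorem~\ref{T particion minimizadores FFP en OFB} tells us that $\{(W_{k},\lambda_{j}^{-1/2}w_{k})\}_{k\in I_{j}}$ is an orthonormal fusion basis for $E_{j}$ (in particular $w_{k}^{2}=\lambda_{j}$ on $I_{j}$, so the normalization is the natural one). Because $S_{\mathbf{W},\mathbf{w}}$ is self-adjoint, the eigenspaces $E_{1},\ldots,E_{J}$ are pairwise orthogonal; combining the bases over $j<J$ therefore yields that $\{(W_{k},\lambda_{j}^{-1/2}w_{k})\}_{k=1}^{N_{0}-1}$ is an orthonormal family of subspaces whose orthogonal direct sum equals $\bigoplus_{j<J}E_{j}$, i.e.\ an orthonormal fusion basis for that subspace. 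This is the "orthonormal fusion frame sequence" part of the statement.

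For the second piece, Theorem~\ref{T particion minimizadores FFP en TFF}(5) applied to $j=J$ together with $I_{J}=\{N_{0},\ldots,K\}$ gives that $\{(W_{k},w_{k})\}_{k=N_{0}}^{K}$ is a $\lambda_{J}$-tight fusion frame for $E_{J}$. It only remains to identify $E_{J}$ with the orthogonal complement of $\bigoplus_{j<J}E_{j}$: by Corollary~\ref{C minimo local entonces genera}, $(\mathbf{W},\mathbf{w})$ is a fusion frame, hence $S_{\mathbf{W},\mathbf{w}}$ is invertible, so $\lambda_{J}>0$ and $\bigoplus_{j=1}^{J}E_{j}=\mathbb{F}^{d}$, whence $E_{J}=\bigl(\bigoplus_{j<J}E_{j}\bigr)^{\perp}$. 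Assembling these observations gives the claimed decomposition. I do not anticipate a serious obstacle; the only subtlety worth flagging is making sure $\lambda_{J}>0$ (which is why Corollary~\ref{C minimo local entonces genera} is invoked) so that the two pieces really are orthogonal complements rather than one sitting inside the kernel.
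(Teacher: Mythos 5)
Your proposal is correct and follows essentially the same route as the paper: the paper's proof simply cites Theorem~\ref{T particion minimizadores FFP en TFF}, Theorem~\ref{T particion minimizadores FFP en OFB}, Proposition~\ref{T minimo no ajustado FFP IJ={1,...,K}} and Corollary~\ref{C minimo local entonces genera}, and you have spelled out how these combine (including the point that $\lambda_{J}>0$ so that $E_{J}$ is the orthogonal complement of $\bigoplus_{j<J}E_{j}$), which is exactly the intended argument.
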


\begin{proof}It follows from Theorem~\ref{T particion minimizadores FFP en TFF}, Theorem~\ref{T particion minimizadores FFP en OFB}, Proposition~\ref{T minimo no ajustado FFP
IJ={1,...,K}} and Corollary~\ref{C minimo local entonces genera}.
\end{proof}

\begin{cor}\label{C valor minimo FFP con N0}
Let $\mathbf{L}\in \N^K$ and $\mathbf{w}$ be a vector of decreasing
weights with $(\mathbf{L},d)$-irregularity equal to $N_{0}$. Then
any local minimizer of the fusion frame potential
$FFP_{\mathbf{w}},$ which belongs to $ \mathcal{E}$, is also a
global minimizer and the minimum value is
\begin{equation}\label{minval}
\sum_{k=1}^{N_0-1}w_k^4L_k+\frac{1}{d-\sum_{k=1}^{N_0-1}L_k}\left(\sum_{k=N_0}^{K}w_k^2L_k\right)^2.
\end{equation}
\end{cor}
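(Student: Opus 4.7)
The plan is to evaluate $FFP_{\mathbf{w}}(\mathbf{W})$ explicitly from the structural description of Theorem~\ref{T estructura minimos FFP con N0} and the eigenvalue identities of Theorem~\ref{T particion minimizadores FFP en TFF}, and then argue global minimality by combining the resulting value's uniformity over $\mathcal{E}$ with an explicit construction.

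Since $S_{\mathbf{W},\mathbf{w}}$ is self-adjoint, $FFP_{\mathbf{w}}(\mathbf{W})=\text{tr}(S_{\mathbf{W},\mathbf{w}}^{2})=\sum_{j=1}^{J}\lambda_{j}^{2}\,\text{dim}(E_{j})$, which I split at $j=J$. For $j<J$, Theorem~\ref{T particion minimizadores FFP en OFB} supplies $w_{k}^{2}=\lambda_{j}$ for every $k\in I_{j}$ together with the orthogonal decomposition $E_{j}=\bigoplus_{k\in I_{j}}W_{k}$; hence $\text{dim}(E_{j})=\sum_{k\in I_{j}}L_{k}$ and $\lambda_{j}^{2}\,\text{dim}(E_{j})=\sum_{k\in I_{j}}w_{k}^{4}L_{k}$. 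Summing across $j<J$ and using Proposition~\ref{T minimo no ajustado FFP IJ={1,...,K}} to identify $\bigcup_{j<J}I_{j}=I_{J}^{c}=\{1,\ldots,N_{0}-1\}$ produces the first term of (\ref{minval}). For $j=J$, Theorem~\ref{T particion minimizadores FFP en TFF}(4) gives $\lambda_{J}\,\text{dim}(E_{J})=\sum_{k\in I_{J}}w_{k}^{2}L_{k}=\sum_{k=N_{0}}^{K}w_{k}^{2}L_{k}$, while orthogonality of the eigenspaces forces $\text{dim}(E_{J})=d-\sum_{k=1}^{N_{0}-1}L_{k}$; substitution produces the second term of (\ref{minval}).

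For the global-minimum claim, the expression in (\ref{minval}) depends only on $\mathbf{w}$, $\mathbf{L}$, $d$ and $N_{0}$, so every local minimizer in $\mathcal{E}$ attains this common value. To see that this value is the global infimum I would exhibit a concrete $\mathbf{V}\in\mathcal{S}_{K}(\mathbf{L})$ realizing the prescription of Theorem~\ref{T estructura minimos FFP con N0}: take $V_{1},\ldots,V_{N_{0}-1}$ mutually orthogonal (feasible since the irregularity inequality at $j=N_{0}-1$ forces $\sum_{k=1}^{N_{0}-1}L_{k}<d$), and on the $(d-\sum_{k=1}^{N_{0}-1}L_{k})$-dimensional orthogonal complement take $\{V_{k}\}_{k=N_{0}}^{K}$ to be a tight fusion frame with weights $\{w_{k}\}_{k=N_{0}}^{K}$. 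The reduced parameters satisfy the fundamental inequality on the complement by the defining inequality of the irregularity (via Lemma~\ref{L relacion des fund e irregularidad}), so such a tight fusion frame exists. Its fusion frame operator is block-diagonal, and a direct computation gives $FFP_{\mathbf{w}}(\mathbf{V})$ equal to the value in (\ref{minval}). Hence the infimum of $FFP_{\mathbf{w}}$ over $\mathcal{S}_{K}(\mathbf{L})$ is at most this value, and combined with the lower bound coming from the local-minimum hypothesis this forces the given local minimizer to be global. The main obstacle I foresee is precisely this auxiliary existence step: confirming that the fundamental inequality on the reduced problem is sufficient, not only necessary, for a tight fusion frame with prescribed weights to exist, which must be invoked from the results developed in Section~6.
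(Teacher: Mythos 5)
Your computation of the value \eqref{minval} is sound and runs essentially parallel to the paper's: you evaluate $\mathrm{tr}(S_{\mathbf{W},\mathbf{w}}^{2})=\sum_{j}\lambda_{j}^{2}\dim(E_{j})$ from the spectral data supplied by Theorem~\ref{T particion minimizadores FFP en TFF}, Theorem~\ref{T particion minimizadores FFP en OFB} and Proposition~\ref{T minimo no ajustado FFP IJ={1,...,K}}, while the paper evaluates the same quantity through the traces $\mathrm{tr}(\pi_{W_{k}}\pi_{W_{k'}})$ together with Proposition~\ref{cotainf}; both routes show that every local minimizer in $\mathcal{E}$ attains the common value \eqref{minval}.

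The genuine gap is in your global-minimality argument, and it is twofold. First, the logic is inverted: exhibiting a concrete $\mathbf{V}\in\mathcal{S}_{K}(\mathbf{L})$ with $FFP_{\mathbf{w}}(\mathbf{V})$ equal to \eqref{minval} only shows that the infimum of the potential is at most \eqref{minval}, which is already automatic because your local minimizer attains that value; it says nothing about whether some other configuration has strictly smaller potential. There is no ``lower bound coming from the local-minimum hypothesis''---local minimality carries no global information. What the corollary actually requires is that \eqref{minval} is a lower bound for $FFP_{\mathbf{w}}$ on all of $\mathcal{S}_{K}(\mathbf{L})$, and this is precisely the external input the paper invokes, namely Proposition~4.1.2 of \cite{Massey-Ruiz-Stojanoff (2010)}; your proposal contains no substitute for it. Second, the auxiliary existence step you flag as an obstacle is in fact false as stated: the fundamental inequality is necessary but not sufficient for the existence of a tight fusion frame with prescribed weights and dimensions. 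The paper's own Remark~\ref{contraejemplo} ($d=3$, two $2$-dimensional subspaces with unit weights) is a counterexample to sufficiency, and the sufficiency direction of Theorem~\ref{T existencia TFF sii fund ineq} is available only under the extra hypothesis $(\mathbf{w},\mathbf{L})\in\mathcal{M}$, which you cannot assume for the reduced data without circularity. So the construction of $\mathbf{V}$ may fail, and even when it succeeds it does not yield the conclusion; the missing ingredient is the global lower bound, not an attaining configuration.
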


\begin{proof}
Let $\mathbf{W}$ be any local minimizer in $ \mathcal{E}.$ Then we
have the decomposition of Theorem~\ref{T estructura minimos FFP con
N0}. On the one hand, $\pi_{W_k}\pi_{W_{k'}}=0$ for $k'\neq
k=1,\ldots,N_0-1,$ and hence
$$\sum_{k,k^{\prime}=1}^{N_{0}-1}
w_k^2w_{k\prime}^2\text{tr}(\pi_{W_{k}}\pi_{W_{k^{\prime}}})=\sum_{k=1}^{N_0-1}
w_k^4L_k.$$ On the other hand,
$\text{dim}(\text{span}\bigcup_{k=1}^{N_0-1}W_k)=\sum_{k=1}^{N_0-1}L_k$,
so
$\text{dim}(\text{span}\bigcup_{k=N_0}^{K}W_k)=d-\sum_{k=1}^{N_0-1}L_k$
and by Proposition~\ref{cotainf},
$$\sum_{k,k^{\prime}=N_{0}}^{K}
w_k^2w_{k\prime}^2\text{tr}(\pi_{W_{k}}\pi_{W_{k^{\prime}}})=\frac{1}{d-\sum_{k=1}^{N_0-1}L_k}\left(\sum_{k=N_0}^{K}w_k^2L_k\right)^2.$$
Therefore
$$FFP_{\mathbf{w}}(\mathbf{W},\mathbf{w})=\sum_{k=1}^{N_0-1}
w_k^4L_k
+\frac{1}{d-\sum_{k=1}^{N_0-1}L_k}\left(\sum_{k=N_0}^{K}w_k^2L_k\right)^2.$$
We have proved that every local minimizer in $ \mathcal{E}$  of the
fusion frame potential attains the same value (\ref{minval}). By
Proposition~4.1.2 of \cite{Massey-Ruiz-Stojanoff (2010)},
(\ref{minval}) is a lower bound for $FFP_{\mathbf{w}}.$ Hence every
local minimizer is also a global minimizer and it attains the value
given in (\ref{minval})
\end{proof}

\begin{rem} It can be seen that if a fusion frame $(\mathbf{W},\mathbf{w})$  has
the structure described in Theorem~ \ref{T estructura minimos FFP
con N0}, then $(\mathbf{W},\mathbf{w})\in \mathcal{E}$. In
\cite[Proposition~4.1.2]{Massey-Ruiz-Stojanoff (2010)} it is shown
that $(\mathbf{W},\mathbf{w})$ has the structure described in
Theorem~ \ref{T estructura minimos FFP con N0} if and only if the
value given in (\ref{minval}) is attained.
\end{rem}

\section{Existence of tight fusion frames}

Tight fusion frames, as well as tight frames, are particular useful
mainly because they allow a very simple and computational efficient
representation of the elements of $\mathcal{H}$. In this section we
relate the the existence of tight fusion frames with the minimizers
of the fusion frame potential and the fundamental inequality.

\begin{prop}\label{P (w,L) des fund entonces minimo FFP ajustado}
Let $(\mathbf{w}, \mathbf{L})$ satisfy the fundamental inequality
and let $\mathbf{W}$ be a local minimizer of the fusion frame
potential $FFP_{\mathbf{w}},$ such that $(\mathbf{W},\mathbf{w})\in
\mathcal{E}$. Then $(\mathbf{W},\mathbf{w})$  is a tight fusion
frame.
\end{prop}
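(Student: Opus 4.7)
The plan is to reduce the statement to the structural description already established in Theorem~\ref{T estructura minimos FFP con N0} (or equivalently to the index identification in Proposition~\ref{T minimo no ajustado FFP IJ={1,...,K}}), by exploiting that the fundamental inequality forces the irregularity of the weight vector to vanish.

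First I would observe that every ingredient in the statement is invariant under a simultaneous permutation of the pairs $(W_{k},w_{k})$: the value of $FFP_{\mathbf{w}}(\mathbf{W})$, the property of being a local minimizer, the condition $(\mathbf{W},\mathbf{w})\in\mathcal{E}$, the fundamental inequality \eqref{E desigualdad fundamental}, and the tightness of $(\mathbf{W},\mathbf{w})$. Hence, reordering if necessary, I would assume without loss of generality that $\mathbf{w}$ is arranged in decreasing order.

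Next, by Lemma~\ref{L relacion des fund e irregularidad}, the fact that $(\mathbf{w},\mathbf{L})$ satisfies the fundamental inequality is equivalent to the $(\mathbf{L},d)$-irregularity of $\mathbf{w}$ being zero, i.e.\ in the notation of Lemma~\ref{L irregularidad}, $N_{0}=1$. Then Proposition~\ref{T minimo no ajustado FFP IJ={1,...,K}} applies and gives $I_{J}=\{N_{0},\dots,K\}=\{1,\dots,K\}$. Combining this with Theorem~\ref{T particion minimizadores FFP en TFF}(5), the family $\{W_{k}\}_{k\in I_{J}}=\{W_{k}\}_{k=1}^{K}$ is a $\lambda_{J}$-tight fusion frame for $E_{J}$. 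Since by Corollary~\ref{C minimo local entonces genera} $(\mathbf{W},\mathbf{w})$ is a fusion frame for $\mathbb{F}^{d}$, the span of the $W_{k}$'s is all of $\mathbb{F}^{d}$, and therefore $E_{J}=\mathbb{F}^{d}$. Consequently $S_{\mathbf{W},\mathbf{w}}=\lambda_{J}I$, i.e.\ $(\mathbf{W},\mathbf{w})$ is a tight fusion frame.

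There is no real obstacle here: the proposition is essentially a specialization of Theorem~\ref{T estructura minimos FFP con N0} to the case when the irregularity is zero, in which case the orthonormal part of the decomposition is empty and only the tight piece survives. The only care required is in verifying that the permutation reduction is legitimate and in correctly matching the two slightly different uses of the symbol $N_{0}$ (as the index of Lemma~\ref{L irregularidad} versus the irregularity itself).
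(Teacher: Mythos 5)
Your argument is correct, and it is not circular (Lemma~\ref{L relacion des fund e irregularidad}, Proposition~\ref{T minimo no ajustado FFP IJ={1,...,K}} and Corollary~\ref{C minimo local entonces genera} all precede this proposition and do not use it), but it follows a genuinely different route from the paper. You reduce to the irregularity machinery: after the permutation reduction you invoke Lemma~\ref{L relacion des fund e irregularidad} to get $N_{0}=1$, then Proposition~\ref{T minimo no ajustado FFP IJ={1,...,K}} to get $I_{J}=\{1,\dots,K\}$, and conclude tightness from Theorem~\ref{T particion minimizadores FFP en TFF}(5) together with Corollary~\ref{C minimo local entonces genera} (implicitly also part (2) of that theorem, to see that $E_{J}$ contains every $W_{k}$ and hence equals $\mathbb{F}^{d}$); in effect you exhibit the proposition as the $N_{0}=1$ case of Theorem~\ref{T estructura minimos FFP con N0}, at the cost of leaning on the most technical result of Section 5 and of the ordering/WLOG step. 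The paper instead argues directly and more economically: if $(\mathbf{W},\mathbf{w})$ were not tight then $J>1$, and combining Theorem~\ref{T particion minimizadores FFP en TFF} (the trace identity $\sum_{k\in I_{J}}w_{k}^{2}L_{k}=\lambda_{J}\dim E_{J}$) with Theorem~\ref{T particion minimizadores FFP en OFB} ($w_{k}^{2}=\lambda_{j}$ and $\sum_{k\in I_{j}}L_{k}=\dim E_{j}$ for $j<J$) gives the strict estimate $\sum_{k=1}^{K}w_{k}^{2}L_{k}<\lambda_{1}d\le\bigl(\max_{k}w_{k}^{2}\bigr)d$, contradicting the fundamental inequality; no ordering of the weights, no irregularity, and no appeal to Proposition~\ref{T minimo no ajustado FFP IJ={1,...,K}} is needed. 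So your proof buys a structural ``specialization'' viewpoint, while the paper's buys brevity and independence from Section 5; both are sound, and the only care your version needs is exactly the point you flag (the permutation invariance and the fact that the irregularity is really that of the sequence $w_{k}^{2}$).
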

\begin{proof}
Let $(\mathbf{w}, \mathbf{L})$ satisfy the fundamental inequality
and $\mathbf{W}$ be a local minimizer of $FFP_{\mathbf{w}}.$ By
Theorem~\ref{T particion minimizadores FFP en TFF} and
Theorem~\ref{T particion minimizadores FFP en OFB}, if
$(\mathbf{W},\mathbf{w})$ is not tight, then $J>1$ and
\begin{equation*}
\begin{aligned}
\sum_{k=1}^Kw_k^2L_k&=\sum_{j=1}^{J-1}\sum_{k\in I_j}w_k^2L_k+\sum_{k\in I_J}w_k^2L_k =\sum_{j=1}^{J-1}\sum_{k\in I_j}\lambda_jL_k+\lambda_J\text{dim}(E_{J})\\
&< \lambda_1\sum_{j=1}^{J-1}\sum_{k\in I_j}L_k+\lambda_1 \text{dim}(E_{J})=\lambda_1(\sum_{j=1}^{J-1}\text{dim}(E_{j})+\text{dim}(E_{J}))\\
&=\lambda_1 (d-\text{dim}(E_{J})+\text{dim}(E_{J})) =w_1^2d,
\end{aligned}
\end{equation*}
which contradicts that $(\mathbf{w}, \mathbf{L})$ satisfies the
fundamental inequality. Therefore,  $(\mathbf{W},\mathbf{w})$  is a
tight fusion frame.
\end{proof}

We will consider the following class of pairs of weights and
dimensions:
\begin{align}
\mathcal{M}=\{(\mathbf{w}, \mathbf{L}):\text{there exists a
local minimizer of the } &\text{frame potential }\notag\\
&FFP_{\mathbf{w}} \text{ that belongs to } \mathcal{E}\}.\notag
\end{align}
As a consequence of Proposition~\ref{P fundineq} and
Proposition~\ref{P (w,L) des fund entonces minimo FFP ajustado} we
obtain:

\begin{thm}\label{T existencia TFF sii fund ineq}

Assume that $(\mathbf{w}, \mathbf{L})\in \mathcal{M}$ There exists a
tight fusion frame $(\mathbf{W},\mathbf{w})$ for $\mathbb{F}^{d}$
with $\mathbf{W} \in \mathcal{S}_{K}(\mathbf{L})$ if and only if
$(\mathbf{w}, \mathbf{L})$ satisfies the fundamental inequality.
\end{thm}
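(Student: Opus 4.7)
The plan is to prove the two implications separately, using the two propositions cited in the paragraph just before the theorem.

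For the forward direction, I would assume there exists a tight fusion frame $(\mathbf{W},\mathbf{w})$ for $\mathbb{F}^{d}$ with $\mathbf{W}\in\mathcal{S}_K(\mathbf{L})$. Then Proposition~\ref{P fundineq} applies directly and yields that $(\mathbf{w},\mathbf{L})$ satisfies the fundamental inequality. Note that this direction does not use the hypothesis $(\mathbf{w},\mathbf{L})\in\mathcal{M}$ at all; it is only needed for the converse.

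For the converse, I would use the hypothesis $(\mathbf{w},\mathbf{L})\in\mathcal{M}$ to extract a local minimizer $\mathbf{W}$ of $FFP_{\mathbf{w}}$ such that $(\mathbf{W},\mathbf{w})\in\mathcal{E}$; such a minimizer exists by the very definition of $\mathcal{M}$. Then Proposition~\ref{P (w,L) des fund entonces minimo FFP ajustado}, applied under the assumption that $(\mathbf{w},\mathbf{L})$ satisfies the fundamental inequality, gives that this $(\mathbf{W},\mathbf{w})$ is a tight fusion frame. Since $\mathbf{W}\in\mathcal{S}_K(\mathbf{L})$ by construction, this produces the desired tight fusion frame with the prescribed dimensions and weights.

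I do not anticipate any obstacle here: the proof is essentially a two-line combination of the two cited propositions, and the role of $\mathcal{M}$ is precisely to guarantee the existence of the local minimizer in $\mathcal{E}$ that feeds into Proposition~\ref{P (w,L) des fund entonces minimo FFP ajustado}. If anything merits a comment, it is simply to stress that the class $\mathcal{M}$ is nonempty in a wide range of situations (in particular whenever the minimizer produced by compactness of $\mathcal{S}_K(\mathbf{L})$ happens to lie in $\mathcal{E}$), so the theorem is genuinely useful and not vacuous.
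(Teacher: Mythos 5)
Your proposal is correct and matches the paper's intended argument: the paper states the theorem precisely as a consequence of Proposition~\ref{P fundineq} (tightness implies the fundamental inequality) and Proposition~\ref{P (w,L) des fund entonces minimo FFP ajustado} (the fundamental inequality forces a local minimizer in $\mathcal{E}$, guaranteed by $(\mathbf{w},\mathbf{L})\in\mathcal{M}$, to be tight). Your observation about where the hypothesis $(\mathbf{w},\mathbf{L})\in\mathcal{M}$ is actually used is accurate and consistent with the paper.
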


In \cite{Massey-Ruiz-Stojanoff (2010)} a characterization is given
under the restriction that $\sum_{k=1}^K w_k^2L_k=1,$  in the form
of Horn-Klyachko compatibility conditions. It can be complicated to
use in practice, since it involves the computation of the
Littlewood-Richardson coefficients of certain associated partitions.

\begin{rem}\label{contraejemplo}
The previous theorem can be used to show that some pairs
$(\mathbf{w}, \mathbf{L})$ are not in $\mathcal{M}.$ For instance,
consider $d=3$, $K=L=2$ and $w_1=w_2=1.$ In this case $d\leq KL.$ We
are going to see that $(1,2)\notin \mathcal{M}$. Assume by
contradiction that $(1,2)\in \mathcal{M}$. By Theorem~\ref{T
existencia TFF sii fund ineq}, there exists a tight fusion frame
with fusion frame bounds $\frac{KL}{d}=\frac{4}{3}.$ Let $P_1$,
$P_2$ be orthogonal projections such that
\begin{equation}\label{suma de proy.}
\frac{4}{3}I=P_1+P_2.
\end{equation}

Let $U_1$ be a unitary matrix and let $D_1$ a diagonal matrix such
that $P_1=U_1D_1U_1^{\ast}$.  From (\ref{suma de proy.}),
$\frac{4}{3}I=D_1+U_1^{\ast}P_2U_1$ where $D_1$ and
$U_1^{\ast}P_2U_1$ are orthogonal projections. The elements of the
diagonal satisfy that $\alpha^2=\alpha,$ therefore $\alpha=0$ or
$\alpha=1$. Since, $\text{tr}(D_1)=\text{tr}(P_1)=2$ we can assume
$D_1=\left(\begin{array}{ccc}
                    1 & 0 & 0 \\
                    0 & 1 & 0 \\
                    0 & 0 & 0 \\
                  \end{array}
                \right)$. Thus, $U_1^{\ast}P_2U_1=\left(
                           \begin{array}{ccc}
                             \frac{1}{3} & 0 & 0 \\
                             0 & \frac{1}{3} & 0 \\
                             0 & 0 & \frac{4}{3} \\
                           \end{array}
                         \right)$ which is not idempotent, and hence it is
not an orthogonal projection.
\end{rem}

\begin{rem}
A frame can be seen as a fusion frame where all the subspaces are
one-dimensional and the weights are equal to the norms of the
elements of the frame. We note that in this case (\ref{E desigualdad
fundamental}) coincides with the fundamental inequality for frames
introduced in \cite{Casazza-Fickus-Kovacevic-Leon-Tremain (2006)},
where it has been proved that $(\mathbf{w},1)$ always belongs to
$\mathcal{M}.$ Thus Theorem~\ref{T existencia TFF sii fund ineq}
shows that the fundamental inequality (\ref{E desigualdad
fundamental}) that involves the weights and the dimensions of the
subspaces of fusion frames plays the same role as the one that
involves the norms of frames.

If $\sum_{k=1}^K L_k=d$, set $\mathbf{W}$ such that $\mathbb{F}^{d}$
is the orthogonal sum of its elements. Clearly,
$(\mathbf{W},\mathbf{w})\in \mathcal{E}$ and
$FFP_{\mathbf{w}}(\mathbf{W},\mathbf{w})=\sum_{k=1}^{K}w_{k}^{4}L_{k}$.
If $w_{1}=\ldots=w_{K}$, then the $(\mathbf{L},d)$-irregularity of
$\mathbf{w}$ is $0$.  If $N_{0}$ is such that $w_{1}\geq\ldots\geq
w_{N_{0}-1}
> w_{N_{0}}=\ldots=w_{K}$, then the $(\mathbf{L},d)$-irregularity
of $\mathbf{w}$ is $N_{0}-1$. In both cases, the value of
$FFP_{\mathbf{w}}(\mathbf{W},\mathbf{w})$ coincides with
(\ref{minval}), so, by
\cite[Proposition~4.1.2]{Massey-Ruiz-Stojanoff (2010)}, $\mathbf{W}$
is a global minimizer. Therefore, if $\sum_{k=1}^K L_k=d$, the pair
$(\mathbf{w}, \mathbf{L})\in \mathcal{M}$ for any $\mathbf{w}$.
\end{rem}


\end{document}